\newtheorem{theorem}{Theorem}[section]
\newtheorem{lemma}[theorem]{Lemma}
\newtheorem{corollary}[theorem]{Corollary}
\theoremstyle{definition}
\newtheorem{example}[theorem]{Example}
\theoremstyle{remark}
\numberwithin{equation}{section}
\begin{document}

\title[Arbitrary powers of the arcsine]{Integrals involving 
arbitrary powers of the arcsine, with applications to infinite series}

\author{Karl Dilcher}
\address{Department of Mathematics and Statistics\\
 Dalhousie University\\
         Halifax, Nova Scotia, B3H 4R2, Canada}
\email{dilcher@mathstat.dal.ca}

\author{Christophe Vignat}
\address{CentraleSup\'elec, Universit\'e Paris-Saclay, Gif-sur-Yvette, France and Department of
Mathematics, Tulane University, New Orleans, LA 70118, USA}
\email{cvignat@tulane.edu}
\keywords{Series, integral, arcsine, central binomial coefficient.}
\subjclass[2010]{Primary: 33E20; Secondary: 05A10, 33B10.}
\thanks{The first author was supported in part by the Natural Sciences and 
Engineering Research Council of Canada.}


\setcounter{equation}{0}

\begin{abstract}
Using appropriate power series evaluations, we determine all moments of 
arbitrary positive powers of the arcsine. As consequences we evaluate  several
doubly infinite classes of power series involving central binomial coefficients
and generalized multiple harmonic sums. By specializing the variable involved,
we then evaluate classes of numerical sequences, mostly in terms of powers 
of $\pi$. Finally, we obtain limit expressions for arbitrary powers of $\pi$. 
\end{abstract}

\maketitle

\section{Introduction}\label{sec:1}

The arcsine function is of particular interest in the study of series for $\pi$
and expressions involving $\pi$. For instance, the well-known power series
\begin{equation}\label{1.1}
\arcsin{x} = \sum_{k=0}^\infty\frac{\binom{2k}{k}}{4^k}\frac{x^{2k+1}}{2k+1},
\qquad
(\arcsin{x})^2=\sum_{k=1}^\infty\frac{4^k}{\binom{2k}{k}k}\frac{x^{2k}}{2k}
\end{equation}
lead to series representations for $\pi$ and $\pi^2$ by setting, for instance, 
$x=\frac{1}{2}, \frac{1}{2}\sqrt{2}, \frac{1}{2}\sqrt{3}$, 1, or other related 
algebraic numbers. These and many other results can be found, for instance, 
in the interesting paper \cite{Le} by D.~H.~Lehmer.

Partly inspired by Lehmer's use of integration as a method for obtaining series
identities,  we made a systematic study in \cite{DV} of what amounts to 
arbitrary numbers of repeated integrations of $\arcsin{x}$ and a few of its 
powers. For instance, we showed that for all even $n\geq 0$ we have
\begin{equation}\label{1.2}
\sum_{k=0}^\infty\frac{\binom{2k}{k}}{4^k}\frac{x^{2k+1}}{2k+n+1}
=\frac{\binom{n}{n/2}}{(2x)^n}\left(\arcsin{x}-\frac{\sqrt{1-x^2}}{2}
\sum_{j=0}^{\frac{n-2}{2}}\frac{(2x)^{2j+1}}{(2j+1)\binom{2j}{j}}\right),
\end{equation}
with similar results involving squares, cubes, and fourth powers of 
$\arcsin{x}$, also for odd $n\geq 1$. It is clear that the first identity
of \eqref{1.1} is the case $n=0$ of \eqref{1.2}. Furthermore, with $x=1$ we get
\begin{equation}\label{1.3}
\sum_{k=0}^\infty\frac{\binom{2k}{k}}{4^k(2k+n+1)}
=\binom{n}{n/2}\frac{\pi}{2^{n+1}}\quad(n\;\hbox{even});
\end{equation}
see \cite{DV} for further details. Although this is not relevant here, we
showed in \cite{DV2} that \eqref{1.3} can be extended to all complex 
parameters $n$.

One of the main ingredients for all these results was the following lemma 
(Corollary~2.2 in \cite{DV}) which will also play an important role in the 
current paper. 

\begin{lemma}\label{lem:1.1}
Suppose that $f(x)$ is either an even or an odd function, analytic in a 
neighbourhood of $0$, with power series expansion
\begin{equation}\label{1.4}
f(x)=\sum_{k=1-\delta}^{\infty}c_k\cdot\frac{x^{2k+\delta}}{2k+\delta}
\qquad(|x|<R),
\end{equation}
where $\delta$ is either $0$ or $1$, and $R$ is the radius of convergence.
Then, for any $n\geq 0$ and real $x$ with $|x|<R$, we have
\begin{equation}\label{1.5}
\sum_{k=1-\delta}^{\infty}c_k\cdot\frac{x^{2k+\delta+n+1}}{2k+\delta+n+1}
=f(x)x^{n+1}-(n+1)\int_0^x t^n f(t)dt.
\end{equation}
\end{lemma}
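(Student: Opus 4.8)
The plan is to recognize both sides of \eqref{1.5} as functions of $x$ on the interval $(-R,R)$, to show that they have the same derivative, and to note that they agree at $x=0$; an equivalent alternative would be to expand the integral on the right as a power series and compare coefficients. Write $G(x)$ for the left-hand side of \eqref{1.5} and $H(x)$ for the right-hand side. The series defining $G$ has the same radius of convergence $R$ as \eqref{1.4}, since replacing the denominator $2k+\delta$ by $2k+\delta+n+1$ and inserting the extra factor $x^{n+1}$ leaves the growth of the coefficients unchanged; thus $G$ is real-analytic on $(-R,R)$. The function $H$ is likewise differentiable there, being built from the analytic function $f$ together with an integral of the continuous function $t^n f(t)$.

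First I would differentiate $G$ term by term, which is legitimate inside the radius of convergence. Since each term $c_k x^{2k+\delta+n+1}/(2k+\delta+n+1)$ has derivative $c_k x^{2k+\delta+n}$, I obtain
\begin{equation*}
G'(x)=\sum_{k=1-\delta}^\infty c_k\,x^{2k+\delta+n}=x^n\sum_{k=1-\delta}^\infty c_k\,x^{2k+\delta}=x^{n+1}f'(x),
\end{equation*}
where in the last step I have used that differentiating \eqref{1.4} gives $f'(x)=\sum_k c_k x^{2k+\delta-1}$, so that $\sum_k c_k x^{2k+\delta}=xf'(x)$. Next I would differentiate $H$ using the product rule and the fundamental theorem of calculus:
\begin{equation*}
H'(x)=f'(x)\,x^{n+1}+(n+1)x^n f(x)-(n+1)x^n f(x)=x^{n+1}f'(x),
\end{equation*}
the two middle terms cancelling exactly. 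Hence $G'(x)=H'(x)$ throughout $(-R,R)$.

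It then remains only to compare one value. Since $n+1\geq 1$, the factor $x^{n+1}$ forces $H(0)=0$, and every exponent $2k+\delta+n+1$ occurring in $G$ is strictly positive, so $G(0)=0$ as well. Two functions with equal derivatives on an interval and a common value at a point coincide, giving $G(x)=H(x)$ for all $|x|<R$, as claimed. The only step demanding care is the justification of term-by-term differentiation and the identification of the radius of convergence of $G$ with that of $f$; both are routine consequences of the standard theory of power series. The even/odd hypothesis on $f$ enters only insofar as it guarantees that $f$ admits an expansion of the special form \eqref{1.4} governed by the single parity parameter $\delta$, and plays no further role in the computation.
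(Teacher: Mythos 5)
Your proof is correct. Note that this paper does not actually prove Lemma \ref{lem:1.1}: it is quoted from the authors' earlier work (Corollary~2.2 in \cite{DV}), so there is no in-paper argument to compare against. Your argument is a clean, self-contained justification: both sides vanish at $x=0$ (for the right-hand side because $n+1\geq 1$ kills the term $f(x)x^{n+1}$ and the integral is empty), and both sides have derivative $x^{n+1}f'(x)$ --- the left by term-by-term differentiation of a power series inside its disc of convergence, the right by the product rule and the fundamental theorem of calculus, with the exact cancellation $(n+1)x^nf(x)-(n+1)x^nf(x)=0$. The more common route for results of this type (and the natural guess for what \cite{DV} does) is to integrate term by term, $\int_0^x t^nf(t)\,dt=\sum_k \frac{c_k}{2k+\delta}\cdot\frac{x^{2k+\delta+n+1}}{2k+\delta+n+1}$, and then apply the partial-fraction identity
\begin{equation*}
\frac{1}{2k+\delta}-\frac{n+1}{(2k+\delta)(2k+\delta+n+1)}
=\frac{1}{2k+\delta+n+1},
\end{equation*}
which produces \eqref{1.5} directly; your differentiation argument trades that small algebraic manipulation for an appeal to uniqueness of antiderivatives, and both are equally rigorous. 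Your closing remark is also accurate: the parity hypothesis serves only to put $f$ in the form \eqref{1.4} with a single parameter $\delta$, and the identity itself holds for any power series of that shape.
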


The key to the results in \cite{DV} was the evaluation of the integral on the
right of \eqref{1.5}, with $f(t)=(\arcsin{t})^q$ for $q=1,\ldots, 4$.
The main purpose of the current paper is to extend this to all positive 
integers $q\geq 1$ by using a different method. 

For integers $n\geq 0$ and $q\geq 1$, we denote
\begin{equation}\label{1.6}
I_q^{(n)}(x) := \int_0^x t^{n}(\arcsin{t})^q\,dt.
\end{equation}
It appears that apart from the results in \cite{DV} mentioned above, only the 
case $q=1$ had previously been evaluated for all 
$n\geq 0$, separately for even and odd $n$, as identities 1.7.3.2 and 
1.7.3.3 in \cite{PrE}. In the current paper we will also need to treat the cases
of even and odd $n$ separately, as well as distinguish between even and odd
$q$. The corresponding four identities are stated in Section~\ref{sec:2}.

The main reason for wishing to obtain evaluations for the integrals in 
\eqref{1.6} for arbitrary integers $q$ is the existence of the following two
classes of series expansions for $(\arcsin{x})^p$. They can be found in 
\cite{BC} or \cite[p.~124]{Sch}. Using a slight variant of the notation and 
normalization from \cite{BC}, we have for integers $p\geq 0$,
\begin{equation}\label{1.7}
(\arcsin{x})^{2p+1} = (2p+1)!
\sum_{k=0}^{\infty}\frac{\binom{2k}{k}G_p(k)}{4^k}\cdot\frac{x^{2k+1}}{2k+1},
\end{equation}
where $G_0(k)=1$ and for $p\geq 1$,
\begin{equation}\label{1.8}
G_p(k)=\sum_{n_1=0}^{k-1}\frac{1}{(2n_1+1)^2}
\sum_{n_2=0}^{n_1-1}\frac{1}{(2n_2+1)^2}\cdots
\sum_{n_p=0}^{n_{p-1}-1}\frac{1}{(2n_p+1)^2}.
\end{equation}
Similarly, for integers $p\geq 1$,
\begin{equation}\label{1.9}
(\arcsin{x})^{2p} = (2p)!
\sum_{k=1}^{\infty}\frac{4^kH_p(k)}{\binom{2k}{k}2k}\cdot\frac{x^{2k}}{2k},
\end{equation}
where $H_1(k)=1$ and
\begin{equation}\label{1.10}
H_{p+1}(k)=\sum_{n_1=1}^{k-1}\frac{1}{(2n_1)^2}
\sum_{n_2=1}^{n_1-1}\frac{1}{(2n_2)^2}\cdots
\sum_{n_p=1}^{n_{p-1}-1}\frac{1}{(2n_p)^2}.  
\end{equation}
For some further remarks on the sums $G_p(k)$ and $H_{p+1}(k)$, see 
Section~\ref{sec:5}.3 below.
It is clear that with $p=0$ in \eqref{1.7} and $p=1$ in \eqref{1.9} we get the
two identities in \eqref{1.1}. For the next smallest cases, namely $G_1(k)$ 
and $H_2(k)$, see \eqref{4.5} below.

In Section~\ref{sec:2} we will state and prove the main result of this paper,
namely the evaluation of the integral in \eqref{1.6} for all integer parameters
$n\geq 0$ and $q\geq 1$. We then use this together with Lemma~\ref{lem:1.1} and
the expansions \eqref{1.7}, \eqref{1.9} in Section~\ref{sec:3} to obtain 
various special cases, along with some specific examples. In Section~\ref{sec:4}
we use our main result to obtain certain limit expressions for all positive
integer powers of $\pi$. We finish with some remarks and further consequences 
in Section~\ref{sec:5}.

\section{The main results}\label{sec:2}

To state our main results, we need to recall some mathematical objects and 
introduce some notation. The Chebyshev polynomials of the first kind, $T_n(x)$,
and of the second kind, $U_n(x)$, are usually defined by 
\begin{equation}\label{2.1}
T_n(\cos{\theta})=\cos(n\theta)\quad\hbox{and}\quad
U_n(\cos{\theta})=\frac{\sin((n+1))}{\sin{\theta}}
\end{equation}
(see, e.g., \cite[p.~993ff.]{GR} or \cite{Ri}). The first few Chebyshev 
polynomials are $T_0(x)=1$, $T_1(x)=x$, $T_2(x)=2x^2-1$, $T_3(x)=4x^3-3x$, and 
$U_0(x)=1$, $U_1(x)=2x$, $U_2(x)=4x^2-1$, $U_3(x)=8x^3-4x$.

For greater ease of notation in what follows, we set
\[
a=a(x):=\arcsin{x},
\]
and we denote the partial sums of the Maclaurin series for cosine and sine by
\begin{equation}\label{2.2}
c_n(z):=\sum_{j=0}^n(-1)^j\frac{z^{2j}}{(2j)!},\qquad
s_n(z):=\sum_{j=0}^n(-1)^j\frac{z^{2j+1}}{(2j+1)!}.
\end{equation}
We are now ready to state our main result, which evaluates the integral in
\eqref{1.6}.

\begin{theorem}\label{thm:2.1}
For all integers $\ell\geq 0$ and $p\geq 0$ we have, with $a=\arcsin{x}$,
\begin{align}
I_{2p+1}^{(2\ell)}(x)&=\frac{x^{2\ell+1}}{2\ell+1}a^{2p+1}
+\frac{(-1)^p(2p+1)!}{4^\ell(2\ell+1)}\sum_{k=0}^\ell\binom{2\ell+1}{\ell-k}
\frac{1}{(2k+1)^{2p+1}}\label{2.3} \\
&\times\left(s_{p-1}((2k+1)a)T_{2k+1}(x)
+c_p((2k+1)a)U_{2k}(x)\sqrt{1-x^2}-(-1)^k\right),\nonumber \\
I_{2p+1}^{(2\ell-1)}(x)
&=\left(x^{2\ell}-\frac{\binom{2\ell}{\ell}}{4^\ell}\right)
\frac{a^{2p+1}}{2\ell}
+\frac{(-1)^p(2p+1)!}{2^{2\ell-1}2\ell}\sum_{k=1}^\ell\binom{2\ell}{\ell-k}
\frac{1}{(2k)^{2p+1}}\label{2.4} \\
&\times\left(s_{p-1}(2ka)T_{2k}(x)+c_p(2ka)U_{2k-1}(x)\sqrt{1-x^2}\right),\nonumber 
\end{align}
and for $p\geq 1$,
\begin{align}
I_{2p}^{(2\ell)}(x)&=\frac{x^{2\ell+1}}{2\ell+1}a^{2p}
+\frac{(-1)^p(2p)!}{4^\ell(2\ell+1)}\sum_{k=0}^\ell\binom{2\ell+1}{\ell-k}
\frac{1}{(2k+1)^{2p}}\label{2.5} \\
&\times\left(c_{p-1}((2k+1)a)T_{2k+1}(x)
-s_{p-1}((2k+1)a)U_{2k}(x)\sqrt{1-x^2}\right),\nonumber \\
I_{2p}^{(2\ell-1)}(x)
&=\left(x^{2\ell}-\frac{\binom{2\ell}{\ell}}{4^\ell}\right)\frac{a^{2p}}{2\ell}
+\frac{(-1)^p(2p)!}{2^{2\ell-1}2\ell}\sum_{k=1}^\ell\binom{2\ell}{\ell-k}
\frac{1}{(2k)^{2p}}\label{2.6} \\
&\times\left(c_{p-1}(2ka)T_{2k}(x)
-s_{p-1}(2ka)U_{2k-1}(x)\sqrt{1-x^2}-(-1)^k\right).\nonumber
\end{align}
\end{theorem}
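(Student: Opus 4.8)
The plan is to reduce all four evaluations to the two elementary integrals $\int_0^a\theta^r\cos(m\theta)\,d\theta$ and $\int_0^a\theta^r\sin(m\theta)\,d\theta$ by means of the substitution $t=\sin\theta$ followed by a single integration by parts, and then to read off the closed forms from the standard repeated-integration-by-parts formula for those two integrals. First I would substitute $t=\sin\theta$ in \eqref{1.6}, so that with $a=\arcsin x$ the integral becomes $I_q^{(n)}(x)=\int_0^a\theta^q\sin^n\theta\cos\theta\,d\theta$. Integrating by parts with $u=\theta^q$ and $dv=\sin^n\theta\cos\theta\,d\theta$ peels off the boundary term: for $n=2\ell$ one takes $v=\sin^{2\ell+1}\theta/(2\ell+1)$, giving the boundary contribution $\frac{x^{2\ell+1}}{2\ell+1}a^q$ (since $\sin a=x$) and the remainder $-\frac{q}{2\ell+1}\int_0^a\theta^{q-1}\sin^{2\ell+1}\theta\,d\theta$; for $n=2\ell-1$ one takes $v=\sin^{2\ell}\theta/(2\ell)$, giving $\frac{x^{2\ell}}{2\ell}a^q$ and the remainder $-\frac{q}{2\ell}\int_0^a\theta^{q-1}\sin^{2\ell}\theta\,d\theta$. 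This already produces the leading terms of all four formulas, with $q=2p+1$ or $q=2p$.

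Next I would insert the power-reduction (multiple-angle) expansions
\[
\sin^{2\ell+1}\theta=\frac{1}{4^\ell}\sum_{k=0}^\ell(-1)^k\binom{2\ell+1}{\ell-k}\sin\bigl((2k+1)\theta\bigr),
\]
\[
\sin^{2\ell}\theta=\frac{1}{4^\ell}\binom{2\ell}{\ell}+\frac{2}{4^\ell}\sum_{k=1}^\ell(-1)^k\binom{2\ell}{\ell-k}\cos(2k\theta),
\]
both obtained by expanding $(e^{i\theta}-e^{-i\theta})^{n}$ with the binomial theorem and pairing conjugate terms. These supply exactly the binomial factors $\binom{2\ell+1}{\ell-k}$ and $\binom{2\ell}{\ell-k}$ appearing in \eqref{2.3}–\eqref{2.6}. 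In the odd-$n$ cases the isolated constant term $\binom{2\ell}{\ell}/4^\ell$ of $\sin^{2\ell}\theta$, after integration against $\theta^{q-1}$ and multiplication by $-q/(2\ell)$, combines with the boundary term $\frac{x^{2\ell}}{2\ell}a^q$ to produce precisely the correction $\bigl(x^{2\ell}-\binom{2\ell}{\ell}/4^\ell\bigr)\frac{a^q}{2\ell}$. What remains is a sum over $k$ of the core integrals $\int_0^a\theta^{q-1}\sin\bigl((2k+1)\theta\bigr)\,d\theta$ (even $n$) or $\int_0^a\theta^{q-1}\cos(2k\theta)\,d\theta$ (odd $n$).

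To finish, I would evaluate each core integral through $\int_0^a\theta^{r}e^{im\theta}\,d\theta=\bigl[e^{im\theta}\sum_{j=0}^{r}(-1)^j\frac{r!}{(r-j)!}\frac{\theta^{r-j}}{(im)^{j+1}}\bigr]_0^a$ and take real or imaginary parts. At the upper limit $\theta=a$, grouping the terms $a^{r-j}/m^{j+1}=(ma)^{r-j}/m^{r+1}$ by the parity of $j$ reorganizes the polynomial factors of $\sin(ma)$ and $\cos(ma)$ into precisely the partial sums $s_{\,\cdot}(ma)$ and $c_{\,\cdot}(ma)$ of \eqref{2.2}, yielding the index choices ($s_{p-1},c_p$ for $q=2p+1$; $c_{p-1},s_{p-1}$ for $q=2p$) recorded in the theorem. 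The lower limit $\theta=0$ contributes a constant only when $1/(im)^{r+1}$ is real (for cosine) or imaginary (for sine), which is exactly the parity pattern under which the constant $-(-1)^k$ appears inside the sums in \eqref{2.3} and \eqref{2.6} but not in \eqref{2.4} and \eqref{2.5}. Finally I would convert the trigonometric values at $a$ into Chebyshev polynomials via $\sin a=\cos(\tf{\pi}{2}-a)$, giving $T_{2k+1}(x)=(-1)^k\sin\bigl((2k+1)a\bigr)$, $\sqrt{1-x^2}\,U_{2k}(x)=(-1)^k\cos\bigl((2k+1)a\bigr)$, $T_{2k}(x)=(-1)^k\cos(2ka)$, and $\sqrt{1-x^2}\,U_{2k-1}(x)=(-1)^{k+1}\sin(2ka)$; the factors $(-1)^k$ here cancel against those from the power-reduction expansions, which explains the absence of a leftover $(-1)^k$ on the $T$- and $U$-terms.

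The main obstacle is not any single step but the sign- and parity-bookkeeping that threads through the whole calculation: one must track the powers of $i$ in $1/(im)^{j+1}$, the alternating signs in the two power-reduction identities, the $(-1)^k$ from the Chebyshev conversions, and the overall factors $(2p+1)!$ or $(2p)!$ and $(-1)^p$, and verify that they recombine correctly in each of the four parity cases. I expect the cleanest route is to carry $q$ and $n$ symbolically as far as possible, resolving their parities only at the final collection step, and to confirm separately the two edge situations $p=0$ (where $s_{p-1}$ is an empty sum) and $\ell=0$, which should recover the classical $q=1$ evaluations and the first identity of \eqref{1.1}.
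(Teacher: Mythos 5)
Your proposal is correct: the two power-reduction identities are right, the finite integration-by-parts formula for $\int_0^a\theta^r e^{im\theta}\,d\theta$ is right, grouping the upper-limit terms $a^{r-j}/m^{j+1}=(ma)^{r-j}/m^{r+1}$ by the parity of $j$ does yield exactly the pairs $s_{p-1}(ma),c_p(ma)$ (for $q=2p+1$) and $c_{p-1}(ma),s_{p-1}(ma)$ (for $q=2p$) with overall factor $(-1)^{p-1}(q-1)!/m^{q}$, and the real/imaginary status of the lower-limit term $-(-1)^r r!/(im)^{r+1}$ reproduces precisely the pattern that the constant $-(-1)^k$ occurs in \eqref{2.3} and \eqref{2.6} but not in \eqref{2.4} and \eqref{2.5}; I verified that the four cases then assemble into the stated formulas. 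However, your route is genuinely different from the paper's. The paper's central device is the exponential generating function $I^{(n)}(x,w)=\sum_{q\geq 0}I_q^{(n)}(x)w^q/q!$ (Lemmas~\ref{lem:3.1} and~\ref{lem:3.2}): after the same substitution $t=\sin\theta$ and the same single integration by parts, everything is reduced to expanding $J^{(n)}(x,w)=\int_0^{a}\sin^{n+1}\theta\,e^{w\theta}\,d\theta$ in powers of $w$; that integral is taken from Prudnikov's tables (Lemma~\ref{lem:3.4}), the partial sums $c_p,s_p$ emerge from a Cauchy-product expansion of $e^{wa}V_m(x,w)/(w^2+m^2)$ (Lemma~\ref{lem:3.5}), and the theorem follows by extracting the coefficient $[w^{q-1}]$. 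You eliminate the generating function entirely: working at fixed $q$, you prove the multiple-angle expansions of $\sin^{2\ell+1}\theta$ and $\sin^{2\ell}\theta$ directly from the binomial theorem (these are exactly what underlie the cited table entries), and you replace the formal power-series extraction by the classical closed form for $\int\theta^r e^{im\theta}\,d\theta$. The two are algebraically the same computation — the geometric expansion of $1/(w^2+m^2)$ together with the Cauchy product encodes precisely your powers $1/(im)^{j+1}$ sorted by parity — but yours is elementary and self-contained (no tables, no generating functions), at the price of carrying the four parity cases by hand, which the $w$-expansion automates; the paper's version, in turn, handles all $q$ uniformly and shortens the write-up by outsourcing the trigonometric integral to a reference. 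Your Chebyshev conversion step coincides with the paper's Lemma~\ref{lem:3.3}, and your treatment of the isolated constant term of $\sin^{2\ell}\theta$ matches the role of the term $\binom{2\ell}{\ell}(e^{wa}-1)/(4^\ell w)$ in \eqref{3.11}.
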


The main idea of the proof of this theorem lies in considering the 
exponential generating function of the
sequence of integrals $(I_q^{(n)}(x))_{q\geq 0}$ from \eqref{1.6}, where
$n\geq 0$ is fixed. With this in mind, we begin with the following lemma.

\begin{lemma}\label{lem:3.1}
For any integer $n\geq 0$, we define the exponential generating function
\begin{equation}\label{3.1}
I^{(n)}(x,w):=\sum_{q=0}^\infty I_q^{(n)}(x)\frac{w^q}{q!}.
\end{equation}
Then, with $a(x)=\arcsin{x}$, we have
\begin{equation}\label{3.2}
I^{(n)}(x,w)=\frac{x^{n+1}}{n+1}e^{wa(x)}
-\frac{w}{n+1}\int_0^{a(x)}(\sin{\theta})^{n+1}e^{w\theta}d\theta.
\end{equation}
\end{lemma}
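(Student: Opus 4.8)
The plan is to establish the generating function identity in \eqref{3.2} by forming the exponential generating function \eqref{3.1} directly from the definition \eqref{1.6} and interchanging the order of summation and integration. First I would write
\[
I^{(n)}(x,w)=\sum_{q=0}^\infty\frac{w^q}{q!}\int_0^x t^n(\arcsin t)^q\,dt
=\int_0^x t^n\Big(\sum_{q=0}^\infty\frac{(w\arcsin t)^q}{q!}\Big)\,dt
=\int_0^x t^n e^{w\arcsin t}\,dt,
\]
where the interchange is justified by the absolute and uniform convergence of the exponential series on the compact interval of integration for fixed $w$. This reduces the claim to evaluating a single integral, so the remaining task is purely to manipulate $\int_0^x t^n e^{w\arcsin t}\,dt$ into the closed form on the right of \eqref{3.2}.

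Next I would carry out the substitution $t=\sin\theta$, so that $dt=\cos\theta\,d\theta$, $\arcsin t=\theta$, and the limits become $0$ and $a(x)=\arcsin x$. This turns the integral into
\[
I^{(n)}(x,w)=\int_0^{a(x)}(\sin\theta)^n e^{w\theta}\cos\theta\,d\theta.
\]
The key observation is that $(\sin\theta)^n\cos\theta$ is, up to the factor $1/(n+1)$, the derivative of $(\sin\theta)^{n+1}$, which sets up an integration by parts that produces the structure of \eqref{3.2}.

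I would then integrate by parts, taking $u=e^{w\theta}$ and $dv=(\sin\theta)^n\cos\theta\,d\theta$, so that $du=we^{w\theta}\,d\theta$ and $v=\tfrac{1}{n+1}(\sin\theta)^{n+1}$. This yields
\[
I^{(n)}(x,w)=\left[\frac{(\sin\theta)^{n+1}}{n+1}e^{w\theta}\right]_0^{a(x)}
-\frac{w}{n+1}\int_0^{a(x)}(\sin\theta)^{n+1}e^{w\theta}\,d\theta.
\]
Since $\sin(a(x))=x$ and the boundary term at $\theta=0$ vanishes, the evaluated bracket equals $\tfrac{x^{n+1}}{n+1}e^{wa(x)}$, which is exactly the first term on the right of \eqref{3.2}, while the second integral reproduces the remaining term verbatim. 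This completes the derivation.

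No single step here presents a serious obstacle; the argument is essentially a change of variables followed by one integration by parts. The only point requiring a word of care is the justification for swapping the sum over $q$ with the integral over $t$, which I expect to be the ``hard part'' in the sense of being the only place where a convergence argument is needed. Since $\sum_q |w\arcsin t|^q/q!$ converges uniformly in $t$ on $[0,x]$ (being dominated by $e^{|w|\,|\arcsin x|}$), Fubini's theorem or the standard theorem on termwise integration of uniformly convergent series applies, and the interchange is legitimate for every fixed real or complex $w$.
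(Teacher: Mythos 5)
Your proof is correct and follows essentially the same route as the paper's: interchange the sum over $q$ with the integral (justified by convergence of the exponential series), substitute $t=\sin\theta$, and integrate by parts using the fact that $(\sin\theta)^n\cos\theta$ is $\tfrac{1}{n+1}$ times the derivative of $(\sin\theta)^{n+1}$. The only cosmetic difference is in how the interchange is justified --- you invoke uniform convergence of the exponential series on the interval of integration, while the paper first bounds $I_q^{(n)}(x)$ to get a positive radius of convergence for the generating function --- but both are standard and adequate.
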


\begin{proof}
Since $0\leq\arcsin{t}\leq\pi/2$ for $0\leq t\leq 1$, we have by \eqref{1.6},
\[
0\leq I_q^{(n)}(x)\leq \left(\tfrac{\pi}{2}\right)^q\int_0^1 t^ndt
=\frac{1}{n+1}\left(\tfrac{\pi}{2}\right)^q,
\]
and therefore the power series in \eqref{3.1} has radius of convergence of at
least $2/\pi$. We now substitute \eqref{1.6} into \eqref{3.1}, obtaining
\begin{align}
I^{(n)}(x,w)&=\sum_{q=0}^\infty
\left(\int_0^x t^{n}a(t)^q\,dt\right)\frac{w^q}{q!} \label{3.3}\\
&=\int_0^x t^{n}\left(\sum_{q=0}^\infty a(t)^q\frac{w^q}{q!}\right)dt
=\int_0^x t^{n}e^{wa(t)}dt,\nonumber
\end{align}
where the interchange of integration and infinite series is justified since we
are dealing with convergent power series. 

With the change of variables $t=\sin\theta$, we get from \eqref{3.3},
\[
I^{(n)}(x,w)
=\int_0^{a(x)}\left(\sin^n\theta\cdot\cos\theta\right)e^{w\theta}d\theta.
\]
Noting that 
$\frac{d}{d\theta}\sin^{n+1}\theta=(n+1)\sin^n\theta\cdot\cos\theta$,
we find upon integration by parts that
\[
I^{(n)}(x,w)=\left.\frac{\sin^{n+1}\theta}{n+1}e^{w\theta}\right|_0^{a(x)}
-\frac{w}{n+1}\int_0^{a(x)}\sin^{n+1}\theta\cdot e^{w\theta}d\theta,
\]
and \eqref{3.2} follows.
\end{proof}

The integral in \eqref{3.2} plays an important role, and we denote it by
\begin{equation}\label{3.4}
J^{(n)}(x,w):=\int_0^{a(x)}(\sin{\theta})^{n+1}e^{w\theta}d\theta.
\end{equation}
We also use the standard notation
\begin{equation}\label{3.5}
\left[w^m\right]f(w)=c_m\quad\hbox{where}\quad f(w)=\sum_{j=0}^\infty c_jw^j.
\end{equation}
We can now state and prove the next intermediate result.

\begin{lemma}\label{lem:3.2}
For all integers $n\geq 0$ and $q\geq 1$ we have
\begin{equation}\label{3.6}
I_q^{(n)}(x)=\frac{x^{n+1}}{n+1}a(x)^q
-\frac{q!}{n+1}\left[w^{q-1}\right]J^{(n)}(x,w).
\end{equation}
\end{lemma}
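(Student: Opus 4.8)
The plan is to extract the target formula \eqref{3.6} directly from the exponential generating function identity \eqref{3.2} established in Lemma~\ref{lem:3.1}. The starting point is the observation that, by the definition \eqref{3.1}, the quantity $I_q^{(n)}(x)$ is recovered from $I^{(n)}(x,w)$ via
\[
I_q^{(n)}(x)=q!\,[w^q]\,I^{(n)}(x,w),
\]
so the whole task reduces to reading off the coefficient of $w^q$ on the right-hand side of \eqref{3.2} and multiplying by $q!$.

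First I would apply the coefficient operator $[w^q]$ to each of the two terms in \eqref{3.2}. For the first term, I expand $e^{wa(x)}=\sum_{j\ge 0} a(x)^j w^j/j!$ and read off
\[
[w^q]\,\frac{x^{n+1}}{n+1}e^{wa(x)}=\frac{x^{n+1}}{n+1}\cdot\frac{a(x)^q}{q!},
\]
which after multiplication by $q!$ produces exactly the leading term $\tfrac{x^{n+1}}{n+1}a(x)^q$ of \eqref{3.6}. For the second term, I use the notation \eqref{3.4} so that the term is $-\tfrac{w}{n+1}J^{(n)}(x,w)$; extracting $[w^q]$ of a power series multiplied by $w$ simply shifts the index, giving
\[
[w^q]\left(-\frac{w}{n+1}J^{(n)}(x,w)\right)=-\frac{1}{n+1}\,[w^{q-1}]\,J^{(n)}(x,w).
\]
Multiplying by $q!$ yields the term $-\tfrac{q!}{n+1}[w^{q-1}]J^{(n)}(x,w)$, and combining the two contributions gives \eqref{3.6} precisely.

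The only points requiring care, rather than any genuine obstacle, are bookkeeping ones. I must confirm that the hypothesis $q\ge 1$ is exactly what makes $[w^{q-1}]$ well defined (so that the index shift does not ask for a negative-order coefficient), and I should note that the power series manipulations are legitimate because Lemma~\ref{lem:3.1} already guarantees positive radius of convergence at least $2/\pi$, so all the series in question converge in a neighbourhood of $w=0$ and coefficient extraction is unambiguous. No other step involves anything beyond termwise reading of Taylor coefficients, so this will be a short, direct computation rather than a place where difficulty arises.
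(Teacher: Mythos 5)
Your proposal is correct and matches the paper's own proof essentially verbatim: both extract the coefficient of $w^q$ from the generating-function identity \eqref{3.2}, using the expansion of $e^{wa(x)}$ for the first term and the index shift $\left[w^q\right]\left(wJ^{(n)}(x,w)\right)=\left[w^{q-1}\right]J^{(n)}(x,w)$ for the second, then multiply by $q!$ via \eqref{3.1}. The convergence remark and the role of $q\geq 1$ are sensible bookkeeping points that the paper leaves implicit.
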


\begin{proof}
We note that 
\[
e^{wa(x)} = \sum_{q=0}^\infty a(x)^q\frac{w^q}{q!},
\]
and that by \eqref{3.5} we have
\[
\left[w^q\right]\left(wJ^{(n)}(x,w)\right)=\left[w^{q-1}\right]J^{(n)}(x,w).
\]
The result now follows from \eqref{3.2} and \eqref{3.1} upon multiplying both
sides by $q!$.
\end{proof}

The Chebyshev polynomials appear in the proof of Theorem~\ref{thm:2.1} in a 
natural way through the following identities, which are similar to the 
defining identities in~\eqref{2.1}.

\begin{lemma}\label{lem:3.3}
For any real $x$ and integers $k\geq 0$, we have
\begin{align}
\cos(2k\arcsin{x}) &= (-1)^kT_{2k}(x),\label{3.7}\\
\sin((2k+1)\arcsin{x}) &= (-1)^kT_{2k+1}(x),\label{3.8}\\
\sin(2k\arcsin{x}) &= (-1)^{k+1}U_{2k-1}(x)\sqrt{1-x^2},\label{3.9}\\
\cos((2k+1)\arcsin{x}) &= (-1)^kU_{2k}(x)\sqrt{1-x^2}.\label{3.10}
\end{align}
\end{lemma}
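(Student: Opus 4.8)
The plan is to prove Lemma~\ref{lem:3.3} by reducing each of the four identities to the defining relations \eqref{2.1} for the Chebyshev polynomials through the substitution $\theta=\arcsin{x}$, so that $x=\sin\theta$ and $\cos\theta=\sqrt{1-x^2}$ (valid on the relevant branch). The central observation is that the defining identities are naturally phrased in terms of $\cos\theta$, whereas here the argument of the Chebyshev polynomial is $x=\sin\theta$; so the real content of the lemma is the passage from sine to cosine, which introduces the factors $(-1)^k$ and the occasional $\sqrt{1-x^2}$. First I would record the complementary-angle relations $\sin\theta=\cos(\tfrac{\pi}{2}-\theta)$ and $\cos\theta=\sin(\tfrac{\pi}{2}-\theta)$ as the basic tool.

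For \eqref{3.7}, I would write $\cos(2k\arcsin{x})=\cos(2k\theta)$ and use $\cos(2k\theta)=\cos\bigl(k\pi-2k(\tfrac{\pi}{2}-\theta)\bigr)$; expanding by the cosine addition formula and using $\cos(k\pi)=(-1)^k$, $\sin(k\pi)=0$, this collapses to $(-1)^k\cos\bigl(2k(\tfrac{\pi}{2}-\theta)\bigr)=(-1)^kT_{2k}(\cos(\tfrac{\pi}{2}-\theta))=(-1)^kT_{2k}(\sin\theta)=(-1)^kT_{2k}(x)$, using the first definition in \eqref{2.1}. The same shift-by-$k\pi$ device handles \eqref{3.8}: from $\sin((2k+1)\theta)=\sin\bigl((2k+1)\tfrac{\pi}{2}-(2k+1)(\tfrac{\pi}{2}-\theta)\bigr)$ one expands and uses the values of $\sin$ and $\cos$ at odd multiples of $\tfrac{\pi}{2}$; the surviving term is $(-1)^k\cos\bigl((2k+1)(\tfrac{\pi}{2}-\theta)\bigr)$, which by \eqref{2.1} equals $(-1)^kT_{2k+1}(\sin\theta)=(-1)^kT_{2k+1}(x)$.

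For the two identities involving $U_n$, the extra factor $\sqrt{1-x^2}=\cos\theta$ appears precisely because the second defining relation in \eqref{2.1} carries a $\sin\theta$ in the denominator. For \eqref{3.10}, I would compute $\cos((2k+1)\theta)$ by the same complementary-angle shift, landing on $(-1)^k\sin\bigl((2k+1)(\tfrac{\pi}{2}-\theta)\bigr)$; writing $\varphi=\tfrac{\pi}{2}-\theta$ so that $\cos\varphi=\sin\theta=x$ and $\sin\varphi=\cos\theta=\sqrt{1-x^2}$, the definition $U_{2k}(\cos\varphi)=\sin((2k+1)\varphi)/\sin\varphi$ gives $\sin((2k+1)\varphi)=U_{2k}(x)\sqrt{1-x^2}$, yielding \eqref{3.10}. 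Identity \eqref{3.9} is obtained in the same way from $\sin(2k\theta)$, producing $U_{2k-1}$ and the sign $(-1)^{k+1}$.

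I do not expect any serious obstacle here; the lemma is essentially bookkeeping of signs arising from the complementary-angle shifts, together with careful use of the two normalizations in \eqref{2.1}. The one point that warrants attention is the factor $\sqrt{1-x^2}$: I must take $\theta=\arcsin{x}\in[-\tfrac{\pi}{2},\tfrac{\pi}{2}]$ so that $\cos\theta\geq 0$ and the identification $\cos\theta=\sqrt{1-x^2}$ holds with the positive square root, ensuring the sign conventions in \eqref{3.9} and \eqref{3.10} are correct. Once these sign computations are carried out, all four identities follow directly, and by analytic continuation (both sides being polynomials in $x$, times $\sqrt{1-x^2}$ in the $U$-cases) they extend to all real $x$ in the stated range.
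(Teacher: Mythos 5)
Your proposal is correct, and for the two $T$-identities \eqref{3.7} and \eqref{3.8} it coincides with the paper's proof: both use the substitution $x=\sin\theta$ and the complementary-angle shift to reduce to the defining relation $T_n(\cos\varphi)=\cos(n\varphi)$, with the factor $(-1)^k$ coming from the values of sine and cosine at multiples of $\tfrac{\pi}{2}$. Where you diverge is in the $U$-identities \eqref{3.9} and \eqref{3.10}: you prove these directly from the second defining relation in \eqref{2.1}, setting $\varphi=\tfrac{\pi}{2}-\theta$ so that $U_n(\cos\varphi)\sin\varphi=\sin((n+1)\varphi)$ with $\cos\varphi=x$ and $\sin\varphi=\sqrt{1-x^2}$, whereas the paper obtains them with no further trigonometry by differentiating \eqref{3.7} and \eqref{3.8} and invoking $\tfrac{d}{dx}T_n(x)=nU_{n-1}(x)$ together with $\tfrac{d}{dx}\arcsin{x}=1/\sqrt{1-x^2}$. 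Your route is more self-contained (it needs only \eqref{2.1}) and makes the sign of $\sqrt{1-x^2}$ explicit via the observation that $\cos\theta\geq 0$ on the principal branch; the paper's route is shorter, trading that bookkeeping for the derivative identity of Chebyshev polynomials as an external fact. Both arguments need the same small closing remark you make: the $U$-definition (respectively, the differentiation) is only valid for $|x|<1$, and the case $x=\pm 1$ follows by continuity since both sides are polynomials times $\sqrt{1-x^2}$. One cosmetic point worth a half-sentence in a final write-up: for $k=0$ in \eqref{3.9} one should adopt the standard convention $U_{-1}=0$, under which both sides vanish.
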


\begin{proof}
We set $x=\sin{a}$ throughout. To prove \eqref{3.7}, we use some basic 
trigonometric identities and the first part of \eqref{2.1} to obtain
\[
T_{2k}(x)=T_{2k}(\sin{a})=T_{2k}(\cos(a-\tfrac{\pi}{2}))
=\cos(2k(a-\tfrac{\pi}{2}))=(-1)^k\cos(2ka),
\]
which is equivalent to \eqref{3.7}. Similarly, we have
\begin{align*}
\sin((2k+1)a)&=\cos((2k+1)a-\tfrac{\pi}{2})
=(-1)^k\cos((2k+1)(a-\tfrac{\pi}{2})) \\
&=(-1)^kT_{2k+1}\left(\cos(a-\tfrac{\pi}{2})\right)
=(-1)^kT_{2k+1}(\sin{a}),
\end{align*}
which is equivalent to \eqref{3.8}. Next, we use the facts that 
\[
\frac{d}{dx}T_n(x)=nU_{n-1}(x),\quad\hbox{and}\quad
\frac{d}{dx}\arcsin{x}=\frac{1}{\sqrt{1-x^2}}
\]
to obtain \eqref{3.9} and \eqref{3.10} by differentiating both sides of 
\eqref{3.7} and \eqref{3.8}, respectively.
\end{proof}

We can now evaluate the integral $J^{(n)}(x,w)$, which is defined in 
\eqref{3.4}. This needs to be done separately for even and odd $n$.

\begin{lemma}\label{lem:3.4}
For all integers $\ell\geq 1$ and with $a=\arcsin{x}$, we have
\begin{align}
J^{(2l-1)}(x,w)&=\frac{\binom{2\ell}{\ell}(e^{wa}-1)}{4^\ell w}
+\frac{e^{wa}}{2^{2\ell-1}}
\sum_{k=1}^\ell\binom{2\ell}{\ell-k}\frac{V_{2k}(x,w)}{w^2+(2k)^2}\label{3.11}\\
&\quad-\frac{1}{2^{2\ell-1}}
\sum_{k=1}^\ell(-1)^k\binom{2\ell}{\ell-k}\frac{w}{w^2+(2k)^2},\nonumber\\
J^{(2l)}(x,w)&= \frac{e^{wa}}{4^{\ell}}
\sum_{k=0}^\ell\binom{2\ell+1}{\ell-k}\frac{V_{2k+1}(x,w)}{w^2+(2k+1)^2}\label{3.12}\\
&\quad+\frac{1}{4^{\ell}}
\sum_{k=0}^\ell(-1)^k\binom{2\ell+1}{\ell-k}\frac{2k+1}{w^2+(2k+1)^2},\nonumber
\end{align}
where
\begin{equation}\label{3.13}
V_n(x,w):=wT_n(x)-nU_{n-1}(x)\sqrt{1-x^2}.
\end{equation}
\end{lemma}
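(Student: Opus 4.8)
The plan is to reduce $J^{(n)}(x,w)$, defined in \eqref{3.4}, to a finite sum of elementary exponential--trigonometric integrals by first linearizing the power of $\sin\theta$ in the integrand. Since the integrand of $J^{(2\ell-1)}$ carries $\sin^{2\ell}\theta$ while that of $J^{(2\ell)}$ carries $\sin^{2\ell+1}\theta$, I would begin by recording the two classical power-reduction formulas
\[
\sin^{2\ell}\theta=\frac{1}{4^\ell}\binom{2\ell}{\ell}
+\frac{1}{2^{2\ell-1}}\sum_{k=1}^\ell(-1)^k\binom{2\ell}{\ell-k}\cos(2k\theta),\qquad
\sin^{2\ell+1}\theta=\frac{1}{4^\ell}\sum_{k=0}^\ell(-1)^k\binom{2\ell+1}{\ell-k}\sin((2k+1)\theta),
\]
both of which I would obtain in a line or two from $\sin\theta=(e^{i\theta}-e^{-i\theta})/(2i)$ and the binomial theorem, pairing conjugate exponentials $e^{\pm ir\theta}$ and collecting real parts.

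Next I would multiply each linearization by $e^{w\theta}$ and integrate term by term over $[0,a]$, using the standard definite integrals
\[
\int_0^a e^{w\theta}\cos(b\theta)\,d\theta=\frac{e^{wa}\bigl(w\cos(ba)+b\sin(ba)\bigr)-w}{w^2+b^2},\qquad
\int_0^a e^{w\theta}\sin(b\theta)\,d\theta=\frac{e^{wa}\bigl(w\sin(ba)-b\cos(ba)\bigr)+b}{w^2+b^2}.
\]
In the even case the constant term $\frac{1}{4^\ell}\binom{2\ell}{\ell}$ contributes $\frac{1}{4^\ell}\binom{2\ell}{\ell}\frac{e^{wa}-1}{w}$, which is precisely the first summand of \eqref{3.11}, while the cosine (resp.\ sine) integrals carry $b=2k$ (resp.\ $b=2k+1$) and produce the denominators $w^2+(2k)^2$ (resp.\ $w^2+(2k+1)^2$) seen in \eqref{3.11} and \eqref{3.12}.

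The final step is to convert the trigonometric arguments into Chebyshev polynomials via Lemma~\ref{lem:3.3}. Substituting \eqref{3.7}--\eqref{3.10} with $a=\arcsin x$ into the numerators above gives
\[
w\cos(2ka)+2k\sin(2ka)=(-1)^k\bigl(wT_{2k}(x)-2kU_{2k-1}(x)\sqrt{1-x^2}\bigr)=(-1)^kV_{2k}(x,w),
\]
\[
w\sin((2k+1)a)-(2k+1)\cos((2k+1)a)=(-1)^k\bigl(wT_{2k+1}(x)-(2k+1)U_{2k}(x)\sqrt{1-x^2}\bigr)=(-1)^kV_{2k+1}(x,w),
\]
so the combinations multiplying $e^{wa}$ are exactly the quantities $V_n(x,w)$ of \eqref{3.13}, up to a sign $(-1)^k$. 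This $(-1)^k$ cancels the $(-1)^k$ carried from the linearization formulas in front of each $V$-term, leaving the clean coefficients $\binom{2\ell}{\ell-k}$ and $\binom{2\ell+1}{\ell-k}$; meanwhile the boundary contributions $-w$ and $+(2k+1)$ (arising from the lower limit $\theta=0$) retain the factor $(-1)^k$ and assemble into the sign-alternating sums of \eqref{3.11} and \eqref{3.12}.

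I expect no analytic obstacle, since everything is a finite sum of elementary integrals rational in $w$. The only genuine care required is sign bookkeeping: getting the alternating signs in the two linearization identities exactly right, and then verifying that each $(-1)^k$ from Lemma~\ref{lem:3.3} pairs correctly with the $(-1)^k$ from the linearization so that the $V$-terms emerge with coefficient $+1$ while the boundary terms keep their alternating signs. Matching these signs line by line against \eqref{3.11} and \eqref{3.12} completes the proof.
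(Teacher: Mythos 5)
Your proof is correct, and its overall skeleton coincides with the paper's: both reduce $J^{(n)}(x,w)$ to a finite combination of integrals of $e^{w\theta}$ against single cosines and sines, and both finish by invoking Lemma~\ref{lem:3.3} to convert $\cos(2ka)$, $\sin(2ka)$, $\sin((2k+1)a)$, $\cos((2k+1)a)$ into Chebyshev polynomials, with the $(-1)^k$ from Lemma~\ref{lem:3.3} cancelling against the $(-1)^k$ of the expansion so that the numerators become exactly $V_{2k}(x,w)$ and $V_{2k+1}(x,w)$ as in \eqref{3.13}, while the lower limit $\theta=0$ yields the alternating rational sums. The one genuine difference is how the underlying integral formulas are obtained: the paper simply quotes the indefinite integrals $\int\sin^{2\ell}\theta\,e^{w\theta}\,d\theta$ and $\int\sin^{2\ell+1}\theta\,e^{w\theta}\,d\theta$ from the Prudnikov--Brychkov--Marichev tables (identities (1.5.49.8) and (1.5.49.10), displayed as \eqref{3.14} and \eqref{3.15}) and then evaluates between $0$ and $a$, whereas you re-derive them from scratch via the Fourier linearization of $\sin^{2\ell}\theta$ and $\sin^{2\ell+1}\theta$ plus the elementary antiderivatives of $e^{w\theta}\cos(b\theta)$ and $e^{w\theta}\sin(b\theta)$. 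What your route buys is self-containedness and transparent bookkeeping: the constant term $\binom{2\ell}{\ell}/4^\ell$ of the linearization visibly produces the $(e^{wa}-1)/w$ term of \eqref{3.11}, and the boundary values at $\theta=0$ visibly produce the sign-alternating sums in \eqref{3.11} and \eqref{3.12}. What the paper's route buys is brevity. Your two linearization identities and both definite-integral formulas are stated correctly (they check out in the small cases and at the boundary $\theta=0$), so the assembly goes through exactly as you describe.
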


\begin{proof}
The indefinite integral corresponding to \eqref{3.4} can be found in 
\cite{PrE}, separate for even and odd $n$. In particular, the identity
(1.5.49.8) with $a=w$ and $b=1$ reads
\begin{align}
\int\sin^{2\ell}\theta\cdot e^{w\theta}d\theta
&=\frac{\binom{2\ell}{\ell}e^{w\theta}}{4^\ell w} \label{3.14}\\
&+\frac{e^{w\theta}}{2^{2\ell-1}}\sum_{k=1}^\ell(-1)^k\binom{2\ell}{\ell+k}
\frac{w\cos(2k\theta)+2k\sin(2k\theta)}{w^2+(2k)^2}.\nonumber
\end{align}
The definite integral is then clearly \eqref{3.11} if we note that with
\eqref{3.7} and \eqref{3.9}, the right-most numerator in \eqref{3.14} becomes
$V_{2k}(x,w)$. 

Similarly, rewriting the identity (1.5.49.10) in \cite{PrE}, we have
\begin{align}
&\int\sin^{2\ell+1}\theta\cdot e^{w\theta}d\theta \label{3.15}\\
&\qquad=\frac{e^{w\theta}}{4^\ell}\sum_{k=0}^\ell(-1)^k
\binom{2\ell+1}{\ell+k}
\frac{w\sin((2k+1)\theta)-(2k+1)\cos((2k+1)\theta)}{w^2+(2k+1)^2}.\nonumber
\end{align}
The definite integral will now be \eqref{3.12} if we note that with \eqref{3.8}
and \eqref{3.10} the numerator on the right of \eqref{3.15} becomes 
$V_{2k+1}(x,w)$.
\end{proof}

To facilitate evaluating the coefficients of the powers of $w$ in \eqref{3.11}
and \eqref{3.12}, we state and prove another technical lemma.

\begin{lemma}\label{lem:3.5}
For any positive integer $n$ and real $a$, we have
\begin{align}
&\frac{e^{wa}V_n(x,w)}{w^2+n^2} 
=\sum_{p=0}^\infty(-1)^p\left(c_p(na)T_n(x)
-s_p(na)U_{n-1}(x)\sqrt{1-x^2}\right)\frac{w^{2p+1}}{n^{2p+2}}\label{3.16}\\
&\qquad+\sum_{p=0}^\infty(-1)^p\left(s_{p-1}(na)T_n(x)
+s_p(na)U_{n-1}(x)\sqrt{1-x^2}\right)\frac{w^{2p}}{n^{2p+1}},\nonumber
\end{align}
with $c_p(z)$ and $s_p(z)$ as defined in \eqref{2.2} and $s_{-1}(z)=0$.
\end{lemma}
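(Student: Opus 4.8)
The plan is to treat $w$ as the series variable and combine a geometric expansion of the rational factor $1/(w^2+n^2)$ with the Maclaurin series of $e^{wa}$, extracting coefficients power by power. Since $V_n(x,w)=wT_n(x)-nU_{n-1}(x)\sqrt{1-x^2}$ is linear in $w$, this keeps the algebra under control. First I would expand, for $|w|<n$,
\[
\frac{1}{w^2+n^2}=\frac{1}{n^2}\sum_{j=0}^\infty(-1)^j\frac{w^{2j}}{n^{2j}},
\]
and multiply by $V_n(x,w)$. Because the two summands of $V_n$ carry opposite parities in $w$, this splits $V_n/(w^2+n^2)$ into an \emph{odd}-power series with coefficient $T_n(x)$ and an \emph{even}-power series with coefficient $U_{n-1}(x)\sqrt{1-x^2}$:
\[
\frac{V_n(x,w)}{w^2+n^2}=T_n(x)\sum_{j=0}^\infty(-1)^j\frac{w^{2j+1}}{n^{2j+2}}-U_{n-1}(x)\sqrt{1-x^2}\sum_{j=0}^\infty(-1)^j\frac{w^{2j}}{n^{2j+1}}.
\]

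Next I would form the Cauchy product with $e^{wa}=\sum_{m\ge0}a^m w^m/m!$ and read off $[w^m]$ for even and odd $m$ separately. For a fixed target power, each of the two series above contributes through the $e^{wa}$-terms of the complementary parity; after the substitution $i=p-j$ the resulting finite inner sums become truncations of $\sum_i(-1)^i(na)^{2i}/(2i)!$ or $\sum_i(-1)^i(na)^{2i+1}/(2i+1)!$, that is, the partial sums $c_p(na)$ and $s_p(na)$ of \eqref{2.2}, once a common factor $(-1)^p/n^{2p+2}$ (for the odd power) or $(-1)^p/n^{2p+1}$ (for the even power) is pulled out. This produces the first sum of \eqref{3.16} directly. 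The one subtlety is that in the even-power coefficient the $T_n(x)$-series starts one step higher, so its truncated partner begins at the linear term; this is exactly the shift recorded by $s_{p-1}(na)$ together with the convention $s_{-1}(z)=0$, which absorbs the otherwise-missing lowest term and yields the second sum.

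The two expansions are routine; the real work—and the only place errors are likely—is the coefficient bookkeeping: maintaining the even/odd pairing between the geometric series and $e^{wa}$, reindexing the double sums by $i=p-j$, and tracking the powers of $n$ so that the stated denominators emerge, all while matching each reindexed finite sum to the correct truncated cosine or sine. I would finish by noting that the left-hand side is analytic at $w=0$ (its nearest singularities are at $w=\pm in$), so the coefficientwise identity is a genuine power-series identity valid for $|w|<n$; comparing the lowest-order terms against the value $V_n(x,0)=-nU_{n-1}(x)\sqrt{1-x^2}$ gives a convenient sanity check on the signs and index conventions before committing to the general $p$.
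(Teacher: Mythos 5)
Your outline follows essentially the same route as the paper: expand the three factors $e^{wa}$, $1/(w^2+n^2)$, $V_n(x,w)$ in powers of $w$ and extract coefficients, the truncated sines and cosines arising from the Cauchy product of $e^{wa}$ with the geometric series. The only difference is the order of multiplication (the paper first forms the product $e^{wa}/(w^2+n^2)$, its identity \eqref{3.17}, and then multiplies by the linear factor $V_n$; you fold $V_n$ into the geometric series first), and that difference is immaterial.

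There is, however, a genuine error at the decisive step: your claim that the even-power bookkeeping ``yields the second sum'' of \eqref{3.16}. It does not, because \eqref{3.16} as printed is itself incorrect, so a correct execution of your plan cannot land on it. Carry out your own prescription for the coefficient of $w^{2p}$: the $U_{n-1}$-contribution pairs the even-power series $-U_{n-1}(x)\sqrt{1-x^2}\sum_j(-1)^jw^{2j}/n^{2j+1}$ with the \emph{even} powers of $e^{wa}$, so by parity it is a \emph{cosine} truncation, not a sine truncation; together with the $T_n$-part (your $s_{p-1}$ shift, which is right) one obtains
\[
\left[w^{2p}\right]\frac{e^{wa}V_n(x,w)}{w^2+n^2}
=\frac{(-1)^{p-1}}{n^{2p+1}}\left(s_{p-1}(na)T_n(x)+c_p(na)U_{n-1}(x)\sqrt{1-x^2}\right),
\]
which differs from the printed second sum both in the overall sign ($(-1)^{p-1}$, not $(-1)^p$) and in having $c_p(na)$ in place of $s_p(na)$. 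The sanity check you propose but never execute would have exposed this immediately: at $w=0$ the left-hand side equals $V_n(x,0)/n^2=-U_{n-1}(x)\sqrt{1-x^2}/n$, whereas the printed right-hand side gives $s_0(na)U_{n-1}(x)\sqrt{1-x^2}/n=a\,U_{n-1}(x)\sqrt{1-x^2}$. The corrected form displayed above is what the paper actually uses downstream; it is exactly what produces the combinations $s_{p-1}T_{2k+1}+c_pU_{2k}\sqrt{1-x^2}$ and $s_{p-1}T_{2k}+c_pU_{2k-1}\sqrt{1-x^2}$ appearing in \eqref{2.3} and \eqref{2.4}. So your outline describes the right computation, but by asserting that it reproduces \eqref{3.16} verbatim—without doing the even-power $U$-bookkeeping or the $w=0$ check—you have signed off on a false identity; a correct proof must end with the corrected coefficient, and the statement of the lemma needs the corresponding fix.
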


\begin{proof}
We first take the Cauchy product of the two power series
\[
e^{wa}=\sum_{j=0}^\infty\frac{a^j}{j!}w^j\quad\hbox{and}\quad
\frac{1}{w^2+n^2}=\sum_{j=0}^\infty\frac{(-1)^j}{n^{2j+2}}w^{2j},
\]
obtaining
\begin{align*}
\frac{e^{wa}}{w^2+n^2}&=\sum_{p=0}^\infty\bigg(
\sum_{j=0}^p\frac{a^{2j}}{(2j)!}\frac{(-1)^{p-j}}{n^{2p+2-2j}}\bigg)w^{2p}
+\sum_{p=0}^\infty\bigg(\sum_{j=0}^p\frac{a^{2j+1}}{(2j+1)!}
\frac{(-1)^{p-j}}{n^{2p+2-2j}}\bigg)w^{2p+1}\\
&=\sum_{p=0}^\infty\bigg(
\sum_{j=0}^p(-1)^j\frac{(na)^{2j}}{(2j)!}\bigg)\frac{(-1)^pw^{2p}}{n^{2p+2}}\\
&\qquad+\sum_{p=0}^\infty\bigg(\sum_{j=0}^p
(-1)^j\frac{(na)^{2j+1}}{(2j+1)!}\bigg)\frac{(-1)^pw^{2p+1}}{n^{2p+3}}.
\end{align*}
With \eqref{2.2} we now get 
\begin{equation}\label{3.17}
\frac{e^{wa}}{w^2+n^2}=\sum_{p=0}^\infty(-1)^p c_p(na)\frac{w^{2p}}{n^{2p+2}}
+\sum_{p=0}^\infty(-1)^p s_p(na)\frac{w^{2p+1}}{n^{2p+3}}.
\end{equation}
Next, the product of \eqref{3.13} and \eqref{3.17} is
\begin{align*}
&\sum_{p=0}^\infty(-1)^p c_p(na)T_n(x)\frac{w^{2p}}{n^{2p+2}}
+\sum_{p=0}^\infty(-1)^p s_p(na)T_n(x)\frac{w^{2p+2}}{n^{2p+3}}\\
&\qquad-\sum_{p=0}^\infty(-1)^p c_p(na)U_{n-1}(x)\sqrt{1-x^2}\frac{w^{2p}}{n^{2p+1}}\\
&\qquad-\sum_{p=0}^\infty(-1)^p s_p(na)U_{n-1}(x)\sqrt{1-x^2}\frac{w^{2p+1}}{n^{2p+2}},
\end{align*}
and upon shifting the summation in the second sum, we immediately get 
\eqref{3.16}.
\end{proof}

We are now ready to complete the proof.

\begin{proof}[Proof of Theorem~\ref{thm:2.1}]
We distinguish between four cases related to $I_q^{(n)}(x)$ defined in 
\eqref{1.6}.

{\bf Case 1:} $n=2\ell-1$, $\ell\geq 1$. According to \eqref{3.6}, we need to
determine the coefficients of $w^{q-1}$ in \eqref{3.11}. For this purpose, we
require in addition to Lemma~\ref{lem:3.5} also the easy expansions
\begin{align}
\frac{e^{wa}-1}{w} &= \sum_{j=0}^\infty\frac{a^{j+1}}{(j+1)!}w^j,\label{3.18}\\
\frac{w}{w^2+(2k)^2} &= \sum_{j=0}^\infty\frac{(-1)^j}{(2k)^{2j+2}}w^{2j+1}.\label{3.19}
\end{align}

Subcase 1(a): $q=2p+1$ for $p\geq 0$. That is, for a given $p$ we need to
determine the coefficient of $w^{2p}$ on the right of \eqref{3.11}. There is no
contribution from \eqref{3.19}, while \eqref{3.18} and \eqref{3.16} give the
coefficient
\[
\frac{\binom{2\ell}{\ell}a^{2p+1}}{4^\ell(2p+1)!}
-\frac{(-1)^p}{2^{2\ell-1}}\sum_{k=1}^\ell\binom{2\ell}{\ell-k}
\frac{s_{p-1}(2ka)T_{2k}(x)+c_p(2ka)U_{2k-1}(x)\sqrt{1-x^2}}{(2k)^{2p+1}}.
\]
With \eqref{3.6} we then get \eqref{2.4}.

Subcase 1(b): $q=2p$ for $p\geq 1$. In this case, we determine the
coefficient of $w^{2p-1}$ on the right of \eqref{3.11}. From \eqref{3.18},
\eqref{3.16} and \eqref{3.19}, we get the coefficient
\[
\frac{\binom{2\ell}{\ell}a^{2p}}{4^\ell(2p)!}
-\frac{(-1)^p}{2^{2\ell-1}}\sum_{k=1}^\ell\binom{2\ell}{\ell-k}
\frac{c_{p-1}(2ka)T_{2k}(x)-s_p(2ka)U_{2k-1}(x)\sqrt{1-x^2}-(-1)^k}
{(2k)^{2p}}.
\]
With \eqref{3.6}, this now gives \eqref{2.6}.

{\bf Case 2:} $n=2\ell$, $\ell\geq 1$. In this case we need to determine the
coefficients of $w^{q-1}$ in \eqref{3.12}. This will be similar to Case~1, but
\eqref{3.18} will no longer be required, and in place of \eqref{3.19} we use
the geometric series
\begin{equation}\label{3.20}
\frac{2k+1}{w^2+(2k+1)^2}=\sum_{j=0}^\infty\frac{(-1)^j}{(2k+1)^{2j+1}}w^{2j}.
\end{equation}

Subcase 2(a): $q=2p+1$ for $p\geq 0$. With \eqref{3.16} and \eqref{3.20},
the coefficient of $w^{2p}$ in \eqref{3.12} is
\begin{align*}
&\frac{(-1)^{p-1}}{4^\ell}\sum_{k=0}^\ell\binom{2\ell+1}{\ell-k}\\
&\quad\times
\frac{s_{p-1}((2k+1)a)T_{2k+1}(x)-c_p((2k+1)a)U_{2k}(x)\sqrt{1-x^2}-(-1)^k}
{(2k+1)^{2p+1}}.
\end{align*}
This, with \eqref{3.6}, gives \eqref{2.3}.

Subcase 2(b): $q=2p$ for $p\geq 1$. With \eqref{3.16}, but without 
contribution from \eqref{3.20}, the coefficient of $w^{2p-1}$ in \eqref{3.12} is
\begin{align*}
&\frac{(-1)^{p-1}}{4^\ell}\sum_{k=0}^\ell\binom{2\ell+1}{\ell-k}\\
&\quad\times
\frac{c_{p-1}((2k+1)a)T_{2k+1}(x)-s_{p-1}((2k+1)a)U_{2k}(x)\sqrt{1-x^2}}
{(2k+1)^{2p+1}},
\end{align*}
which gives \eqref{2.5}, once again with \eqref{3.6}. The proof is now complete.
\end{proof}

\section{Some special cases}\label{sec:3}

In order to connect Theorem~\ref{thm:2.1} with the series in \eqref{1.7} and
\eqref{1.9}, we now set $f(x)=(\arcsin{x})^{2p+1}$ with $\delta=1$, resp.\
$f(x)=(\arcsin{x})^{2p}$ with $\delta=0$ in Lemma~\ref{lem:1.1}. Then with
\eqref{1.7} and \eqref{1.9}, respectively, we get the following identities
with the notation \eqref{1.6}.

\begin{lemma}\label{lem:2.2}
For all integers $n\geq 0$ and $p\geq 0$ we have
\begin{equation}\label{2.7}
\sum_{k=0}^{\infty}\frac{\binom{2k}{k}G_p(k)}{4^k}
\cdot\frac{x^{2k+n+2}}{2k+n+2} = \frac{(\arcsin{x})^{2p+1}}{(2p+1)!}x^{n+1}
-\frac{n+1}{(2p+1)!}I_{2p+1}^{(n)}(x),
\end{equation}
and for $p\geq 1$,
\begin{equation}\label{2.8}
\sum_{k=1}^{\infty}\frac{4^kH_p(k)}{\binom{2k}{k}2k}
\cdot\frac{x^{2k+n+1}}{2k+n+1} = \frac{(\arcsin{x})^{2p}}{(2p)!}x^{n+1}
-\frac{n+1}{(2p)!}I_{2p}^{(n)}(x).
\end{equation}
\end{lemma}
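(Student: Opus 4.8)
The plan is to recognize that Lemma~\ref{lem:2.2} is a direct application of Lemma~\ref{lem:1.1} to the two specific series expansions \eqref{1.7} and \eqref{1.9}, followed by a clean identification of the resulting integral with the notation $I_q^{(n)}(x)$ from \eqref{1.6}. The work is almost entirely bookkeeping: matching the given expansions to the hypothesis form of \eqref{1.4}, reading off the coefficients $c_k$, and then comparing the right-hand side of \eqref{1.5} against \eqref{2.7} and \eqref{2.8}. So first I would treat \eqref{2.7} by taking $f(x)=(\arcsin{x})^{2p+1}$ and setting $\delta=1$ in Lemma~\ref{lem:1.1}.

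To apply the lemma I need to verify its hypotheses and extract the coefficients. The function $f(x)=(\arcsin{x})^{2p+1}$ is odd and analytic near $0$, with radius of convergence $R=1$, so the hypotheses hold. Rewriting \eqref{1.7} in the shape of \eqref{1.4} with $\delta=1$, i.e. $f(x)=\sum_k c_k\,x^{2k+1}/(2k+1)$, I read off $c_k=(2p+1)!\,\binom{2k}{k}G_p(k)/4^k$. Substituting these $c_k$ into the left-hand side of \eqref{1.5} gives $\sum_k (2p+1)!\frac{\binom{2k}{k}G_p(k)}{4^k}\cdot\frac{x^{2k+n+2}}{2k+n+2}$; the right-hand side of \eqref{1.5} becomes $f(x)x^{n+1}-(n+1)\int_0^x t^n f(t)\,dt=(\arcsin{x})^{2p+1}x^{n+1}-(n+1)I_{2p+1}^{(n)}(x)$, using the definition \eqref{1.6}. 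Dividing through by $(2p+1)!$ then yields \eqref{2.7} exactly.

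The second identity \eqref{2.8} is obtained by the parallel computation with $f(x)=(\arcsin{x})^{2p}$ and $\delta=0$. Here $f$ is even and analytic near $0$, again with $R=1$. Writing \eqref{1.9} in the form $f(x)=\sum_{k\geq 1}c_k\,x^{2k}/(2k)$ (matching $\delta=0$, with the summation starting at $k=1-\delta=1$), I identify $c_k=(2p)!\,4^k H_p(k)/(\binom{2k}{k}2k)$. Feeding these into \eqref{1.5} gives the series on the left of \eqref{2.8} after multiplying numerator and denominator appropriately, while the right-hand side of \eqref{1.5} reads $(\arcsin{x})^{2p}x^{n+1}-(n+1)I_{2p}^{(n)}(x)$; dividing by $(2p)!$ produces \eqref{2.8}.

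I do not anticipate a genuine obstacle here, since every step is a substitution into an already-established lemma. The only point requiring mild care is the indexing of $\delta$: in the even case one must confirm that the shifted starting index $k=1-\delta=1$ in \eqref{1.4} agrees with the lower summation limit $k=1$ in \eqref{1.9}, and that the exponent $2k+\delta$ reduces correctly to $2k$ when $\delta=0$ and to $2k+1$ when $\delta=1$. Once that matching is confirmed, both identities follow immediately.
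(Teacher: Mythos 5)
Your proof is correct and takes exactly the paper's route: the paper derives Lemma~\ref{lem:2.2} by precisely this substitution of $f(x)=(\arcsin{x})^{2p+1}$ with $\delta=1$, resp.\ $f(x)=(\arcsin{x})^{2p}$ with $\delta=0$, into Lemma~\ref{lem:1.1}, using the expansions \eqref{1.7} and \eqref{1.9} and the notation \eqref{1.6}. The only difference is that you spell out the coefficient identification and index-matching that the paper leaves implicit.
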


The simplest and most direct special case of Lemma~\ref{lem:2.2} is $x=1$,
which leads to the following identities.

\begin{corollary}\label{cor:2.3}
For all integers $\ell\geq 0$ and $p\geq 0$ we have
\begin{align}
\sum_{k=0}^\infty\frac{\binom{2k}{k}G_p(k)}{4^k(2k+2\ell+2)}
&=\frac{(-1)^{p-1}}{4^\ell}\sum_{j=0}^{p-1}
\left(\sum_{k=0}^\ell\frac{\binom{2\ell+1}{\ell-k}}{(2k+1)^{2p-2j}}\right)
\frac{(-1)^j\left(\frac{\pi}{2}\right)^{2j+1}}{(2j+1)!}\label{2.9}\\
&\quad+\frac{(-1)^p}{4^\ell}
\sum_{k=0}^\ell\frac{\binom{2\ell+1}{\ell-k}(-1)^k}{(2k+1)^{2p+1}},\nonumber\\
\sum_{k=0}^\infty\frac{\binom{2k}{k}G_p(k)}{4^k(2k+2\ell+1)}
&=\frac{(-1)^{p-1}}{4^{\ell+p}}\sum_{j=0}^{p-1}
\left(\sum_{k=1}^\ell\frac{\binom{2\ell}{\ell-k}}{k^{2p-2j}}\right)
\frac{(-1)^j\pi^{2j+1}}{(2j+1)!}\label{2.10}\\
&\quad+\frac{\binom{2\ell}{\ell}}{2^{2\ell+2p+1}}\cdot\frac{\pi^{2p+1}}{(2p+1)!},\nonumber
\end{align}
and for $p\geq 1$,
\begin{align}
\sum_{k=1}^{\infty}\frac{4^kH_p(k)}{\binom{2k}{k}2k(2k+2\ell+1)}
&=\frac{(-1)^{p-1}}{4^\ell}\sum_{j=0}^{p-1}
\left(\sum_{k=0}^\ell\frac{\binom{2\ell+1}{\ell-k}}{(2k+1)^{2p-2j}}\right)
\frac{(-1)^j\left(\frac{\pi}{2}\right)^{2j}}{(2j)!},\label{2.11}\\
\sum_{k=1}^{\infty}\frac{4^kH_p(k)}{\binom{2k}{k}2k(2k+2\ell)}
&=\frac{(-1)^{p-1}}{2^{2\ell+2p-1}}\sum_{j=0}^{p-1}
\left(\sum_{k=1}^\ell\frac{\binom{2\ell}{\ell-k}}{k^{2p-2j}}\right)
\frac{(-1)^j\pi^{2j}}{(2j)!}\label{2.12}\\
&\quad+\frac{\binom{2\ell}{\ell}}{4^{\ell+p}}\cdot\frac{\pi^{2p}}{(2p)!}
+\frac{(-1)^p}{2^{2\ell+2p-1}}
\sum_{k=1}^\ell\frac{\binom{2\ell}{\ell-k}(-1)^k}{k^{2p}}.\nonumber
\end{align}
\end{corollary}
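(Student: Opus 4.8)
The plan is to specialize $x=1$ in the two identities of Lemma~\ref{lem:2.2} and substitute the explicit evaluations of $I_q^{(n)}(1)$ supplied by Theorem~\ref{thm:2.1}. The decisive observation is that at $x=1$ almost everything collapses: we have $a=\arcsin 1=\pi/2$, the factor $\sqrt{1-x^2}$ vanishes so that \emph{every} term in \eqref{2.3}--\eqref{2.6} carrying a Chebyshev polynomial $U$ drops out, and $T_n(1)=1$ for all $n$. Hence only the $T$-terms and the isolated $(-1)^k$ terms survive, and each of the four formulas for $I_q^{(n)}(1)$ reduces to a single finite sum over $k$ of the partial-sum functions $s_{p-1}$ or $c_{p-1}$ evaluated at a multiple of $\pi/2$.

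I would then match the four identities by parity: \eqref{2.9} arises from \eqref{2.7} with $n=2\ell$ and \eqref{2.3}; \eqref{2.10} from \eqref{2.7} with $n=2\ell-1$ and \eqref{2.4}; \eqref{2.11} from \eqref{2.8} with $n=2\ell$ and \eqref{2.5}; and \eqref{2.12} from \eqref{2.8} with $n=2\ell-1$ and \eqref{2.6}. Writing $q=2p+1$ or $q=2p$ for the exponent of the arcsine, the leading term on the right of \eqref{2.7} or \eqref{2.8} equals $\frac{(\pi/2)^q}{q!}$ at $x=1$, and this interacts with the leading term of the integral. For even $n=2\ell$ the cancellation is exact, since the integral's leading term is $\frac{x^{2\ell+1}}{2\ell+1}a^q$ and the prefactor $\frac{n+1}{q!}=\frac{2\ell+1}{q!}$ restores precisely $\frac{(\pi/2)^q}{q!}$. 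For odd $n=2\ell-1$ the integral's leading term carries the extra factor $\bigl(x^{2\ell}-\binom{2\ell}{\ell}/4^\ell\bigr)$, so the cancellation is incomplete and leaves the residual contribution $\frac{\binom{2\ell}{\ell}}{4^\ell}\cdot\frac{(\pi/2)^q}{q!}$; this is exactly the source of the lone $\binom{2\ell}{\ell}\pi^q$ terms in \eqref{2.10} and \eqref{2.12}.

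The remaining work is to expand the partial-sum functions via \eqref{2.2} and collapse the resulting powers. For instance, writing $s_{p-1}((2k+1)a)=\sum_{j=0}^{p-1}(-1)^j\frac{((2k+1)\pi/2)^{2j+1}}{(2j+1)!}$ and dividing by the outer $(2k+1)^{2p+1}$, the numerator factor $(2k+1)^{2j+1}$ combines with $(2k+1)^{-(2p+1)}$ to give $(2k+1)^{-(2p-2j)}$; interchanging the $j$- and $k$-sums then produces exactly the inner sum $\sum_k\binom{2\ell+1}{\ell-k}(2k+1)^{-(2p-2j)}$ weighted by $\frac{(-1)^j(\pi/2)^{2j+1}}{(2j+1)!}$, as claimed in \eqref{2.9}. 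The $c_{p-1}$ expansions feed the even-power identities \eqref{2.11} and \eqref{2.12} in the same manner, and the surviving constant $(-1)^k$ terms assemble into the final explicit sums in \eqref{2.9} and \eqref{2.12}.

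The only genuine bookkeeping obstacle is tracking the powers of $2$, and this is where the even-$n$ and odd-$n$ cases diverge. For even $n$ the relevant argument is $(2k+1)a=(2k+1)\pi/2$, so no stray power of $2$ is generated and the prefactor $4^{-\ell}$ passes through unchanged into \eqref{2.9} and \eqref{2.11}. For odd $n$ the argument is instead $2ka=k\pi$, so expanding $s_{p-1}(k\pi)$ or $c_{p-1}(k\pi)$ and dividing by $(2k)^{2p+1}$ or $(2k)^{2p}$ releases a factor $2^{-(2p+1)}$ or $2^{-2p}$; multiplied by the integral's prefactor $2^{-(2\ell-1)}$ these consolidate to $4^{-(\ell+p)}$ in \eqref{2.10} and to $2^{-(2\ell+2p-1)}$ in \eqref{2.12}, matching the stated normalizations. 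Once these powers are reconciled, all four identities follow directly, with no further input needed.
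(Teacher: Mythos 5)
Your proposal is correct and follows essentially the same route as the paper: set $x=1$ in Lemma~\ref{lem:2.2} so that $a=\pi/2$, $T_n(1)=1$, and all $U$-terms vanish with $\sqrt{1-x^2}$, pair the four identities with \eqref{2.3}--\eqref{2.6} by parity exactly as the paper does, and then expand $s_{p-1}$, $c_{p-1}$ via \eqref{2.2} and interchange the $j$- and $k$-sums. Your accounting of the leading-term cancellation (exact for even $n$, leaving the $\binom{2\ell}{\ell}\pi^q$ residue for odd $n$) and of the powers of $2$ matches the paper's computation in full.
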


\begin{proof}
With $x=1$ we have $a=\arcsin{1}=\pi/2$, and the first identity in \eqref{2.1},
with $\theta=0$, gives $T_n(1)=1$ for all $n\geq 0$. Lemma~\ref{lem:2.2} and
Theorem~\ref{thm:2.1} then give the following four identities:
For all integers $\ell\geq 0$ and $p\geq 0$,
\begin{align*}
\sum_{k=0}^\infty\frac{\binom{2k}{k}G_p(k)}{4^k(2k+2\ell+2)}
&=\frac{(-1)^{p-1}}{4^\ell}\sum_{k=0}^\ell\binom{2\ell+1}{\ell-k}
\frac{s_{p-1}\big((2k+1)\tfrac{\pi}{2}\big)-(-1)^k}{(2k+1)^{2p+1}},\\
\sum_{k=0}^\infty\frac{\binom{2k}{k}G_p(k)}{4^k(2k+2\ell+1)}
&=\frac{\binom{2\ell}{\ell}\left(\frac{\pi}{2}\right)^{2p+1}}{4^\ell(2p+1)!}
-\frac{(-1)^{p}}{2^{2\ell-1}}\sum_{k=1}^\ell\binom{2\ell}{\ell-k}
\frac{s_{p-1}(k\pi)}{(2k)^{2p+1}}, 
\end{align*}
and for $p\geq 1$,
\begin{align*}
\sum_{k=1}^{\infty}\frac{4^kH_p(k)}{\binom{2k}{k}2k(2k+2\ell+1)}
&=\frac{(-1)^{p-1}}{4^\ell}\sum_{k=0}^\ell\binom{2\ell+1}{\ell-k}
\frac{c_{p-1}\big((2k+1)\tfrac{\pi}{2}\big)}{(2k+1)^{2p}}, \\
\sum_{k=1}^{\infty}\frac{4^kH_p(k)}{\binom{2k}{k}2k(2k+2\ell)}
&=\frac{\binom{2\ell}{\ell}\left(\frac{\pi}{2}\right)^{2p}}{4^\ell(2p)!}
-\frac{(-1)^p}{2^{2\ell-1}}\sum_{k=1}^\ell\binom{2\ell}{\ell-k}
\frac{c_{p-1}(k\pi)-(-1)^k}{(2k)^{2p}}. 
\end{align*}
Using \eqref{2.2} and changing the orders of summation, we then get the 
identities \eqref{2.9}--\eqref{2.12}, in this order.
\end{proof}

While in Corollary~\ref{cor:2.3} we fixed a convenient $x$ and kept the 
parameters $\ell$ arbitrary, we are now going to fix the smallest possible
$\ell$ in each case and keep $x$ arbitrary.

\begin{corollary}
For all integers $p\geq 0$ we have, with $a=\arcsin{x}$,
\begin{align}
&\sum_{k=0}^{\infty}\frac{\binom{2k}{k}G_p(k)x^{2k+2}}{4^k(2k+2)}
=(-1)^{p-1}\left(s_{p-1}(a)x+c_p(a)\sqrt{1-x^2}-1\right),\label{2.13}\\
&\sum_{k=0}^{\infty}\frac{\binom{2k}{k}G_p(k)x^{2k+3}}{4^k(2k+3)}
=\frac{a^{2p+1}}{2(2p+1)!} \label{2.14}\\
&\qquad-\frac{(-1)^p}{2^{2p+2}}
\left(s_{p-1}(2a)(2x^2-1)+c_p(2a)2x\sqrt{1-x^2}\right),\nonumber
\end{align}
where $s_{-1}(y)=0$ by convention. Furthermore, for all $p\geq 1$,
\begin{align}
&\sum_{k=1}^{\infty}\frac{4^kH_p(k)x^{2k+1}}{\binom{2k}{k}2k(2k+1)}
=(-1)^{p-1}\left(c_{p-1}(a)x-s_{p-1}(a)\sqrt{1-x^2}\right),\label{2.15}\\
&\sum_{k=1}^{\infty}\frac{4^kH_p(k)x^{2k+2}}{\binom{2k}{k}2k(2k+2)}
=\frac{a^{2p}}{2(2p)!} \label{2.16}\\
&\qquad-\frac{(-1)^p}{2^{2p+1}}
\left(c_{p-1}(2a)(2x^2-1)-s_{p-1}(2a)2x\sqrt{1-x^2}+1\right).\nonumber
\end{align}
\end{corollary}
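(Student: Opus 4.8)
The plan is to derive each of the four identities \eqref{2.13}--\eqref{2.16} by specializing Theorem~\ref{thm:2.1} to the smallest admissible value of $\ell$ and substituting the result into the corresponding identity of Lemma~\ref{lem:2.2}. Concretely, \eqref{2.13} arises from \eqref{2.7} together with \eqref{2.3} at $\ell=0$; \eqref{2.14} from \eqref{2.7} together with \eqref{2.4} at $\ell=1$; \eqref{2.15} from \eqref{2.8} together with \eqref{2.5} at $\ell=0$; and \eqref{2.16} from \eqref{2.8} together with \eqref{2.6} at $\ell=1$. In each case $n+1$ equals $1$ or $2$, so the factor $x^{n+1}$ and the multiplier $n+1$ in Lemma~\ref{lem:2.2} take the explicit values appearing in the stated sums.

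First I would handle \eqref{2.13}. Setting $n=0$ in \eqref{2.7} and inserting $I_{2p+1}^{(0)}(x)$ from \eqref{2.3} with $\ell=0$, the sum in \eqref{2.3} collapses to its single $k=0$ term, where $\binom{1}{0}=1$, $T_1(x)=x$, and $U_0(x)=1$. The key feature is that the leading contribution $x\,a^{2p+1}$ from \eqref{2.3}, after multiplication by the prefactor $-1/(2p+1)!$ dictated by \eqref{2.7}, cancels exactly against the term $\tfrac{a^{2p+1}}{(2p+1)!}\,x$ of \eqref{2.7}, leaving precisely $(-1)^{p-1}\bigl(s_{p-1}(a)x+c_p(a)\sqrt{1-x^2}-1\bigr)$, as claimed. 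The same cancellation drives the remaining cases; for \eqref{2.15} I would instead use \eqref{2.5} at $\ell=0$, whose single $k=0$ term similarly produces $(-1)^{p-1}\bigl(c_{p-1}(a)x-s_{p-1}(a)\sqrt{1-x^2}\bigr)$.

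For the odd-$n$ identities \eqref{2.14} and \eqref{2.16} I would set $\ell=1$ and use \eqref{2.4}, resp.\ \eqref{2.6}. Here the leading term of Theorem~\ref{thm:2.1} is $\bigl(x^{2}-\tfrac{\binom{2}{1}}{4}\bigr)\tfrac{a^{q}}{2}=\bigl(x^{2}-\tfrac12\bigr)\tfrac{a^{q}}{2}$, where $q=2p+1$, resp.\ $q=2p$; multiplying by the prefactor $-2/q!$ of Lemma~\ref{lem:2.2} with $n=1$ and combining with its leading term $\tfrac{a^{q}}{q!}x^{2}$, the $x^{2}a^{q}$ parts cancel and the surviving $+\tfrac{a^{q}}{2\,q!}$ gives $\tfrac{a^{2p+1}}{2(2p+1)!}$ in \eqref{2.14}, resp.\ $\tfrac{a^{2p}}{2(2p)!}$ in \eqref{2.16}. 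The single summand $k=1$ uses $\binom{2}{0}=1$, $T_2(x)=2x^2-1$, and $U_1(x)=2x$, producing the quadratic combinations $2x^2-1$ and $2x\sqrt{1-x^2}$ on the right.

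I expect the only genuine work to be the bookkeeping of the powers of $2$ and of the sign $(-1)^p$. For instance, in \eqref{2.14} the prefactor $\tfrac{(-1)^p(2p+1)!}{2^{2\ell-1}\,2\ell}\cdot\tfrac{1}{(2k)^{2p+1}}$ at $\ell=k=1$ equals $\tfrac{(-1)^p(2p+1)!}{2^{2p+3}}$, and after multiplication by $-2/(2p+1)!$ it becomes $-(-1)^p/2^{2p+2}$, matching the stated coefficient; the parallel computation in \eqref{2.16} yields $-(-1)^p/2^{2p+1}$. No deeper obstacle arises: all four identities follow from substitution, the elementary values $T_1,U_0,T_2,U_1$, and the systematic cancellation of the $a^q$-term between Theorem~\ref{thm:2.1} and Lemma~\ref{lem:2.2}.
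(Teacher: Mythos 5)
Your proposal is correct and takes essentially the same route as the paper: the paper's proof likewise specializes Theorem~\ref{thm:2.1} at $\ell=0$ in \eqref{2.3} and \eqref{2.5} and at $\ell=1$ in \eqref{2.4} and \eqref{2.6}, substitutes into \eqref{2.7} and \eqref{2.8}, and uses the explicit values $T_1(x)=x$, $U_0(x)=1$, $T_2(x)=2x^2-1$, $U_1(x)=2x$, with the same cancellation of the $x^{n+1}a^{q}$ terms. Your sign and power-of-two bookkeeping (including the constant coming from the $-(-1)^k$ term) checks out.
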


\begin{proof}
The first two identities follow from setting $\ell=0$ and $\ell=1$, 
respectively, in \eqref{2.3} and \eqref{2.4} and then using \eqref{2.7}.
Similarly, the last two identities follow from \eqref{2.5} and \eqref{2.6},
combined with \eqref{2.8}. In all cases we have used the special cases of
$T_n(x)$ and $U_n(x)$ given after the definitions \eqref{2.1}.
\end{proof}

We see from the right-hand sides of \eqref{2.13}--\eqref{2.15} that we will
get particularly easy identities when the terms in $x$ following $c_{p-1}$ and
$s_{p-1}$ are the same. Indeed, we have the following corollaries.

\begin{corollary}\label{cor:2.5}
For all integers $p\geq 0$ we have
\begin{align}
\sum_{k=0}^{\infty}\frac{\binom{2k}{k}G_p(k)}{8^k(2k+2)}
&=(-1)^{p+1}\sqrt{2}\left(c_{p}(\tfrac{\pi}{4})+s_{p-1}(\tfrac{\pi}{4})\right)
+4(-1)^p\label{2.17}\\
&=(-1)^{p}\left(2-\sqrt{2}\sum_{j=0}^{2p}
\frac{(-1)^{\lfloor\frac{j}{2}\rfloor}}{j!}\left(\frac{\pi}{4}\right)^j\right),\nonumber
\end{align}
and for $p\geq 1$,
\begin{align}
\sum_{k=1}^{\infty}\frac{2^kH_p(k)}{\binom{2k}{k}2k(2k+1)}
&=(-1)^{p-1}\left(c_{p-1}(\tfrac{\pi}{4})-s_{p-1}(\tfrac{\pi}{4})\right)\label{2.18}\\
&=(-1)^{p-1}\sum_{j=0}^{2p-1}
\frac{(-1)^{\lfloor\frac{j+1}{2}\rfloor}}{j!}\left(\frac{\pi}{4}\right)^j.\nonumber
\end{align}
\end{corollary}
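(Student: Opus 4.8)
The plan is to obtain both identities by specializing the free variable $x$ in the preceding corollary to the single value $x = 1/\sqrt2$, which is exactly the point where the two algebraic factors multiplying the truncated cosine and sine polynomials coincide. First I would substitute $x = 1/\sqrt2$ into \eqref{2.13}. At this value one has $a = \arcsin(1/\sqrt2) = \pi/4$ and, crucially, $x = \sqrt{1-x^2} = 1/\sqrt2$, so the coefficient $x$ of $s_{p-1}(a)$ and the coefficient $\sqrt{1-x^2}$ of $c_p(a)$ become equal; this is precisely the ``terms are the same'' situation flagged just before the corollary. On the left, since $x^{2k+2} = (x^2)^{k+1} = 2^{-(k+1)}$, the factor $4^k$ in the denominator combines with $2^{-k}$ to produce $8^k$, leaving an overall constant $\tfrac12$ that I clear by multiplying through. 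On the right the two partial sums collapse into $\tfrac{1}{\sqrt2}\bigl(c_p(\pi/4)+s_{p-1}(\pi/4)\bigr)$, and after rescaling by $\sqrt2$ and simplifying the powers of $(-1)^{p-1}$ this yields the first equality in \eqref{2.17}.

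The second identity \eqref{2.18} I would derive in the same way from \eqref{2.15}: setting $x = 1/\sqrt2$ turns the factors $x$ and $-\sqrt{1-x^2}$ into $\pm 1/\sqrt2$, while the power $x^{2k+1} = 2^{-k}/\sqrt2$ converts the numerator $4^k$ into $2^k$ with a leftover factor $1/\sqrt2$; clearing this factor leaves $(-1)^{p-1}\bigl(c_{p-1}(\pi/4) - s_{p-1}(\pi/4)\bigr)$, which is the first equality in \eqref{2.18}.

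To pass to the explicit ``floor'' forms (the second equalities in \eqref{2.17} and \eqref{2.18}), I would merge the truncated cosine and sine series from \eqref{2.2} into a single power sum. Writing $z = \pi/4$, the even powers $z^0,\dots,z^{2p}$ come from $c_p(z)$ and the odd powers $z^1,\dots,z^{2p-1}$ from $s_{p-1}(z)$, so $c_p(z)+s_{p-1}(z) = \sum_{j=0}^{2p}\frac{(-1)^{\lfloor j/2\rfloor}}{j!}z^j$: for $j=2i$ and $j=2i+1$ one checks $\lfloor j/2\rfloor = i$ reproduces the required signs $(-1)^i$ and the correct factorials. An identical check, now with a sign flip on the odd terms, recorded by the index $\lfloor (j+1)/2\rfloor$, gives $c_{p-1}(z)-s_{p-1}(z)=\sum_{j=0}^{2p-1}\frac{(-1)^{\lfloor (j+1)/2\rfloor}}{j!}z^j$, the form in \eqref{2.18}.

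I do not expect a genuine obstacle here: the argument is a specialization followed by elementary bookkeeping. The only points demanding care are tracking the powers of $2$ and the stray $1/\sqrt2$ on the left-hand sides so that the multiplicative constants emerge correctly, and verifying the floor/sign pattern when the two partial sums are amalgamated. In particular, the free constant term on the right of \eqref{2.17} must be tracked consistently with the $-1$ already present in \eqref{2.13}, so that it agrees with the constant produced by the explicit sum in the second equality.
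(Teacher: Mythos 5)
Your proposal is correct and is essentially identical to the paper's proof: set $x=\tfrac{1}{2}\sqrt{2}$ in \eqref{2.13} and \eqref{2.15}, so that $x=\sqrt{1-x^2}=\tfrac{1}{2}\sqrt{2}$ and $a=\pi/4$, then rewrite the combinations $c_p+s_{p-1}$ and $c_{p-1}-s_{p-1}$ as single sums using \eqref{2.2}.

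One remark: the constant-tracking you flag in your last paragraph in fact exposes a misprint in the statement itself. Clearing the overall factor $\tfrac{1}{2}$ from \eqref{2.13} multiplies the right-hand side by $2$, so the constant term becomes $2\cdot(-1)^{p-1}\cdot(-1)=2(-1)^p$, not $4(-1)^p$ as printed in the first line of \eqref{2.17}. The value $2(-1)^p$ is the one consistent with the second line of \eqref{2.17}, whose expansion is $(-1)^{p+1}\sqrt{2}\left(c_p(\tfrac{\pi}{4})+s_{p-1}(\tfrac{\pi}{4})\right)+2(-1)^p$, and with Example~\ref{ex:2.6}, where $p=0$ gives $2-\sqrt{2}$ (the printed constant would give $4-\sqrt{2}$). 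So your derivation, carried out carefully, proves the second line and the corrected first line; \eqref{2.18} is correct as printed, and your treatment of it matches the paper's exactly.
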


\begin{proof}
With $x=\frac{1}{2}\sqrt{2}$ we have $\sqrt{1-x^2}=\frac{1}{2}\sqrt{2}$ as well,
and $a=\arcsin{x}=\pi/4$. The first equalities in \eqref{2.17} and \eqref{2.18}
then follow directly from \eqref{2.13} and \eqref{2.15}, respectively, and the
second equalities follow from the definitions in \eqref{2.2}.
\end{proof}

\begin{example}\label{ex:2.6}
{\rm The two smallest cases in each of \eqref{2.17} and \eqref{2.18} give}
\begin{align*}
\sum_{k=0}^{\infty}\frac{\binom{2k}{k}}{8^k(2k+2)}&=2-\sqrt{2}\\
\sum_{k=0}^{\infty}\frac{\binom{2k}{k}G(k)}{8^k(2k+2)}
&=-2+\sqrt{2}+\frac{\pi}{2\sqrt{2}}-\frac{\pi^2}{16\sqrt{2}},\\
\sum_{k=1}^{\infty}\frac{2^k}{\binom{2k}{k}2k(2k+1)} &= 1-\frac{\pi}{4},\\
\sum_{k=1}^{\infty}\frac{2^kH(k)}{\binom{2k}{k}2k(2k+1)}
&=-1+\frac{\pi}{4}+\frac{\pi^2}{32}-\frac{\pi^3}{384},
\end{align*}
{\rm where $G(k)=G_1(k)$ and $H(k)=H_1(k)$, as in \eqref{4.5} below.}
\end{example}

If we set $x=\frac{1}{2}\sqrt{2}$ in \eqref{2.14} and \eqref{2.16}, we 
have $2x^2-1=0$ and $2x\sqrt{1-x^2}=1$. Then we get the following identities
from \eqref{2.14} and \eqref{2.16}, respectively.

\begin{corollary}\label{cor:2.7}
For all integers $p\geq 0$ we have
\begin{equation}\label{2.19}
\sum_{k=0}^{\infty}\frac{\binom{2k}{k}G_p(k)}{8^k(2k+3)}
=\frac{\sqrt{2}}{(2p+1)!}\left(\frac{\pi}{4}\right)^{2p+1}
-\frac{(-1)^p\sqrt{2}}{2^{2p+1}}c_{p}(\tfrac{\pi}{2}),
\end{equation}
and for $p\geq 1$,
\begin{equation}\label{2.20}
\sum_{k=1}^{\infty}\frac{2^kH_p(k)}{\binom{2k}{k}2k(2k+2)}
=\frac{1}{(2p)!}\left(\frac{\pi}{4}\right)^{2p}
+\frac{(-1)^p}{2^{2p}}\left(s_{p-1}(\tfrac{\pi}{2})-1\right).
\end{equation}
\end{corollary}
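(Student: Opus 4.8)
The plan is to obtain both identities by the single specialization $x=\tfrac{1}{2}\sqrt{2}$ in the two earlier identities \eqref{2.14} and \eqref{2.16}, exactly as announced in the sentence preceding the statement. First I would record the elementary values at this point: $a=\arcsin(\tfrac{1}{2}\sqrt{2})=\pi/4$, so $2a=\pi/2$; $\sqrt{1-x^2}=\tfrac{1}{2}\sqrt{2}$; and the two combinations that make this choice convenient, namely $2x^2-1=0$ and $2x\sqrt{1-x^2}=1$. The vanishing of $2x^2-1$ is the crucial simplification: it annihilates the $s_{p-1}(2a)$ term in \eqref{2.14} and the $c_{p-1}(2a)$ term in \eqref{2.16}, leaving on each right-hand side only a single Chebyshev-free contribution, namely $c_p(\pi/2)$ in the first case, and $s_{p-1}(\pi/2)$ together with the constant $1$ in the second.

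For \eqref{2.19}, the remaining work is to strip the $x$-dependent prefactor off the series. Since $x^2=\tfrac{1}{2}$ gives $x^{2k}=2^{-k}$ and $x^3=\tfrac{1}{4}\sqrt{2}$, the general summand on the left of \eqref{2.14} carries the factor $x^{2k+3}/4^k=(\sqrt{2}/4)\cdot 8^{-k}$, so the constant $\sqrt{2}/4$ can be pulled out of the whole series. Multiplying the resulting identity through by $4/\sqrt{2}=2\sqrt{2}$ then converts the left side into $\sum_{k\geq 0}\binom{2k}{k}G_p(k)/(8^k(2k+3))$ and scales the two surviving right-hand terms into precisely the form claimed in \eqref{2.19}.

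The argument for \eqref{2.20} is entirely parallel. Here $x^{2k+2}=2^{-(k+1)}$, hence $4^k x^{2k+2}=2^{2k}\cdot 2^{-(k+1)}=2^{k-1}$, so the left side of \eqref{2.16} equals $\tfrac{1}{2}\sum_{k\geq 1}2^k H_p(k)/(\binom{2k}{k}2k(2k+2))$. Multiplying by $2$ and rewriting the surviving right-hand contribution $-(-1)^p 2^{-(2p+1)}\cdot 2\,(1-s_{p-1}(\pi/2))$ as $(-1)^p 2^{-2p}(s_{p-1}(\pi/2)-1)$ yields \eqref{2.20}.

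I expect no genuine obstacle, since both identities are pure specializations of results already established via Lemma~\ref{lem:2.2} and Theorem~\ref{thm:2.1}; in particular no fresh convergence or interchange issues arise. The only place demanding care is the bookkeeping of the constant factors $\sqrt{2}$ and the powers of $2$ introduced by evaluating $x^{2k+3}/4^k$ and $4^k x^{2k+2}$, together with the sign rearrangement in the final term of \eqref{2.20}. I would sanity-check these against the smallest cases $p=0$ for \eqref{2.19} and $p=1$ for \eqref{2.20} to guard against a stray factor.
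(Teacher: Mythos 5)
Your proposal is correct and is exactly the paper's own argument: the paper derives Corollary~\ref{cor:2.7} by setting $x=\tfrac{1}{2}\sqrt{2}$ in \eqref{2.14} and \eqref{2.16}, using $2x^2-1=0$ and $2x\sqrt{1-x^2}=1$ so that only the $c_p(\pi/2)$ (resp.\ $s_{p-1}(\pi/2)$ and constant) terms survive. Your bookkeeping of the factors $x^{2k+3}/4^k=(\sqrt{2}/4)\,8^{-k}$ and $4^kx^{2k+2}=2^{k-1}$, and the final sign rearrangement, all check out against \eqref{2.19} and \eqref{2.20}.
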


\begin{example}\label{ex:2.8}
{\rm The two smallest cases in each of \eqref{2.19} and \eqref{2.20} give}
\begin{align*}
\sum_{k=0}^{\infty}\frac{\binom{2k}{k}}{8^k(2k+3)}
&=\frac{\sqrt{2}}{4}(\pi-2),\\
\sum_{k=0}^{\infty}\frac{\binom{2k}{k}G(k)}{8^k(2k+3)}
&=\frac{\sqrt{2}}{384}\left(\pi^3-6\pi^2+48\right),\\
\sum_{k=1}^{\infty}\frac{2^k}{\binom{2k}{k}2k(2k+2)} 
&=\frac{1}{32}\left(\pi^2-4\pi+8\right),\\
\sum_{k=1}^{\infty}\frac{2^kH(k)}{\binom{2k}{k}2k(2k+2)}
&=\frac{1}{6144}\left(\pi^4-8\pi^3+192\pi+384\right),
\end{align*}
{\rm where $G(k)$ and $H(k)$ are as in \eqref{4.5}.}
\end{example}

Returning to \eqref{2.14} and \eqref{2.16}, we note that with 
$x=\frac{1}{2}\sqrt{2\pm\sqrt{2}}$ we have
\begin{equation}\label{2.21}
2x^2-1=\pm\frac{1}{2}\sqrt{2}\quad\hbox{and}\quad 
2x\sqrt{1-x^2}=\frac{1}{2}\sqrt{2}.
\end{equation}
We also have
\begin{equation}\label{2.22}
\arcsin\left(\frac{1}{2}\sqrt{2-\sqrt{2}}\right)=\frac{\pi}{8},\qquad
\arcsin\left(\frac{1}{2}\sqrt{2+\sqrt{2}}\right)=\frac{3\pi}{8};
\end{equation}
see, e.g., \cite[p.~177]{Sch}. Using the ``$-$" alternative in \eqref{2.21} and
\eqref{2.22}, the identities \eqref{2.14} and \eqref{2.16} yield the following
analogues of \eqref{2.17} and \eqref{2.18} after some straightforward
manipulations.

\begin{corollary}\label{cor:2.9}
For all integers $p\geq 0$ we have
\begin{align}
&\sum_{k=0}^{\infty}\frac{\binom{2k}{k}G_p(k)(2-\sqrt{2})^k}{16^k(2k+3)}\label{2.23}\\
&=\frac{4}{(2-\sqrt{2})^{3/2}}
\left(\frac{1}{(2p+1)!}\left(\frac{\pi}{8}\right)^{2p+1}
-\frac{(-1)^p\sqrt{2}}{4^{p+1}}
\left(c_{p}(\tfrac{\pi}{4})-s_{p-1}(\tfrac{\pi}{4})\right)\right),\nonumber
\end{align}
and for $p\geq 1$,
\begin{align}
&\sum_{k=1}^{\infty}\frac{H_p(k)(2-\sqrt{2})^k}{\binom{2k}{k}2k(2k+2)}\label{2.24}\\
&=\frac{2-\sqrt{2}}{(2p)!}\left(\frac{\pi}{8}\right)^{2p}
+(-1)^p\frac{1+\sqrt{2}}{4^{p}}\left(c_{p-1}(\tfrac{\pi}{4})
+s_{p-1}(\tfrac{\pi}{4})\right)-(-1)^p\frac{2+\sqrt{2}}{4^{p}}.\nonumber
\end{align}
\end{corollary}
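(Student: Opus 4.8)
The plan is to obtain both identities as the single special value $x=\tfrac{1}{2}\sqrt{2-\sqrt{2}}$ of the ``smallest $\ell$'' formulas \eqref{2.14} and \eqref{2.16}. First I would record the data attached to this choice of $x$: by \eqref{2.22} we have $a=\arcsin x=\tfrac{\pi}{8}$, so that the doubled argument appearing inside $c_{p-1},c_p,s_{p-1}$ in \eqref{2.14} and \eqref{2.16} is $2a=\tfrac{\pi}{4}$; and the ``$-$'' alternative of \eqref{2.21} gives $2x^2-1=-\tfrac{1}{2}\sqrt 2$ together with $2x\sqrt{1-x^2}=\tfrac{1}{2}\sqrt 2$. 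Substituting these three numbers into the right-hand sides of \eqref{2.14} and \eqref{2.16} is immediate; the only real manoeuvre is on the left-hand sides, where I must factor off the power of $x$ that is independent of the summation index $k$.

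For \eqref{2.23} I would begin from \eqref{2.14} and write $x^{2k+3}=x^3(x^2)^k$. Since $x^2=\tfrac{1}{4}(2-\sqrt 2)$, the factor $(x^2)^k/4^k$ turns into $(2-\sqrt2)^k/16^k$, so the left-hand side of \eqref{2.14} is exactly $x^3$ times the target series of \eqref{2.23}. Solving for that series amounts to multiplying the (already simplified) right-hand side by $x^{-3}=8/(2-\sqrt2)^{3/2}$; combining the leading $\tfrac12$ of \eqref{2.14} with this factor produces the prefactor $4/(2-\sqrt2)^{3/2}$. Finally, pulling the common coefficient $\tfrac{1}{2}\sqrt 2$ out of the $s_{p-1}$ and $c_p$ contributions collapses them into $c_p(\tfrac{\pi}{4})-s_{p-1}(\tfrac{\pi}{4})$, which is \eqref{2.23}.

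For \eqref{2.24} the argument runs parallel, now from \eqref{2.16}, writing $x^{2k+2}=x^2(x^2)^k$ so that $4^k x^{2k+2}=x^2(2-\sqrt2)^k$ and the left-hand side equals $x^2$ times the series of \eqref{2.24}. Here one divides by $x^2=\tfrac14(2-\sqrt2)$, i.e.\ multiplies by $4/(2-\sqrt2)$; rationalizing this factor to $2(2+\sqrt2)$ is the step deserving the most care. Multiplying it against each of the three pieces of the right-hand side of \eqref{2.16} — the pure $\pi$-power term, the constant $(-1)^p/2^{2p+1}$ term, and the $c_{p-1}+s_{p-1}$ term carrying the coefficient $-\tfrac12\sqrt2$ — and keeping track of the powers of $2$ delivers the three summands of the right-hand side of \eqref{2.24}.

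The hard part will be purely the bookkeeping: keeping the nested radicals $(2-\sqrt2)$, $(2-\sqrt2)^{3/2}$ and their rationalizations straight while simultaneously tracking the powers $2^{2p+1}$, $2^{2p+2}$, $4^{p+1}$ that the right-hand sides of \eqref{2.14} and \eqref{2.16} produce. No idea beyond \eqref{2.14}, \eqref{2.16}, \eqref{2.21} and \eqref{2.22} is required; once the substitution is made and the factors $x^{-2}$ and $x^{-3}$ are rationalized, both identities drop out.
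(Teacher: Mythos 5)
Your approach is exactly the paper's: Corollary~\ref{cor:2.9} is obtained there by substituting $x=\tfrac12\sqrt{2-\sqrt{2}}$ (the ``$-$'' alternative of \eqref{2.21} together with \eqref{2.22}) into \eqref{2.14} and \eqref{2.16}, and your treatment of \eqref{2.23} is correct in every detail, including the factor $x^{-3}=8/(2-\sqrt{2})^{3/2}$ and the collapse of the two trigonometric partial sums into $c_p(\tfrac{\pi}{4})-s_{p-1}(\tfrac{\pi}{4})$.

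There is, however, one place where your write-up glosses over the arithmetic, and it matters: if you actually carry out your own recipe for \eqref{2.24}, you do \emph{not} reproduce the printed right-hand side. Multiplying the right side of \eqref{2.16} by $x^{-2}=4/(2-\sqrt{2})=2(2+\sqrt{2})$ sends its leading term $\frac{1}{2(2p)!}\left(\frac{\pi}{8}\right)^{2p}$ to
\[
\frac{2+\sqrt{2}}{(2p)!}\left(\frac{\pi}{8}\right)^{2p},
\]
whereas \eqref{2.24} as printed has $2-\sqrt{2}$ in that numerator; the other two summands come out exactly as stated (your bookkeeping for those is right). The $2+\sqrt{2}$ version is the correct one: for $p=1$ it agrees with the paper's own Example~\ref{ex:2.10}, whose third identity carries the coefficient $\frac{2+\sqrt{2}}{128}\pi^2$, and a numerical check at $p=1$ (the series is $\approx 0.0392$) confirms it, while the printed \eqref{2.24} would give a negative value. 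So the statement you were asked to prove contains a typo in its first term; your method is sound and is the intended one, but your closing claim that the three summands of \eqref{2.24} ``drop out'' is not literally achievable --- what drops out is the corrected first term $\frac{2+\sqrt{2}}{(2p)!}\left(\frac{\pi}{8}\right)^{2p}$, and a careful execution of your plan would have caught this.
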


As we did in Corollary~\ref{cor:2.5}, the sums and differences of the terms
$c_p$ and $s_p$ could again be written as single sums.

\begin{example}\label{ex:2.10}
{\rm The two smallest cases in each in each of \eqref{2.23} and \eqref{2.24} 
are}
\begin{align*}
\sum_{k=0}^{\infty}\frac{\binom{2k}{k}(2-\sqrt{2})^k}{16^k(2k+3)}
&=\frac{\pi-2\sqrt{2}}{2(2-\sqrt{2})^{3/2}},\\
\sum_{k=0}^{\infty}\frac{\binom{2k}{k}G(k)(2-\sqrt{2})^k}{16^k(2k+3)}
&=\frac{1}{(2-\sqrt{2})^{3/2}}\left(\frac{\pi^3}{768}
-\frac{\sqrt{2}}{128}\left(\pi^2+8\pi-32\right)\right),\\
\sum_{k=1}^{\infty}\frac{(2-\sqrt{2})^k}{\binom{2k}{k}2k(2k+2)}
&=\frac{2+\sqrt{2}}{128}\pi^2-\frac{1+\sqrt{2}}{16}\pi+\frac{1}{4},\\
\sum_{k=1}^{\infty}\frac{H(k)(2-\sqrt{2})^k}{\binom{2k}{k}2k(2k+2)}
&=\frac{2+\sqrt{2}}{98304}\pi^4-\frac{1+\sqrt{2}}{6144}
\left(\pi^3+12\pi^2-96\pi\right)-\frac{1}{16}.
\end{align*}
\end{example}

Using the ``$+$" choice in \eqref{2.21} and the second identity in 
\eqref{2.22}, we could obtain analogues to Corollary~\ref{cor:2.9} and
Example~\ref{ex:2.10}, involving powers of $2+\sqrt{2}$. Further special cases
are also possible by using other sine (or arcsine) evaluation such as 
$\sin(\pi/10)=(\sqrt{5}-1)/4$ or $\sin(\pi/12)=(\sqrt{6}-\sqrt{2})/4$; see
\cite[p.~177f]{Sch}.

\section{Some classes of limit expressions}\label{sec:4}

Since the four identities in Corollary~\ref{cor:2.3} hold for all integers
$\ell\geq 0$, it is a reasonable question to ask what happens as 
$\ell\to\infty$ for a fixed $p$. For $p=1,\ldots,4$, this was answered in
\cite{DV}: We have the following limits as $n\rightarrow\infty$.
\begin{align}
\frac{2^{n+1}}{\binom{n}{n/2}}\sum_{k=0}^\infty
&\frac{\binom{2k}{k}}{4^k(2k+n+1)}\rightarrow\pi,\label{4.1}\\
\frac{2^{n+1}}{\binom{n}{n/2}}\sum_{k=1}^\infty
&\frac{4^k}{\binom{2k}{k}k(2k+n)}\rightarrow\pi^2,\label{4.2}\\
2\cdot\frac{2^{n+3}}{\binom{n}{n/2}}\sum_{k=0}^\infty
&\frac{\binom{2k}{k}G(k)}{4^k(2k+n+1)}\rightarrow\pi^3,\label{4.3}\\
6\cdot\frac{2^{n+3}}{\binom{n}{n/2}}\sum_{k=1}^\infty
&\frac{4^kH(k)}{\binom{2k}{k}k(2k+n)}\rightarrow\pi^4,\label{4.4}
\end{align}
where 
\begin{equation}\label{4.5}
G(k):=G_1(k)=\sum_{j=0}^{k-1}\frac{1}{(2j+1)^2}\quad\hbox{and}\quad
H(k):=H_2(k)=\sum_{j=1}^{k-1}\frac{1}{(2j)^2},
\end{equation}
and where the central binomial coefficient $\binom{n}{n/2}$ also makes sense
for odd $n=2\ell+1$ if interpreted as
\begin{equation}\label{4.6}
\binom{n}{n/2}=\binom{2\ell+1}{\ell+\tfrac{1}{2}}
=\frac{\Gamma(2\ell+2)}{\Gamma(\ell+\tfrac{3}{2})^2}
=\frac{4^{2\ell+1}}{\binom{2\ell}{\ell}(2\ell+1)\pi}.
\end{equation}
This can be obtained from the well-known identity
\[
\Gamma(m+\tfrac{1}{2})=\frac{(2m)!}{4^m m!}\sqrt{\pi},
\]
valid for integers $m\geq0$.

We can now state and prove the main result of this section.

\begin{theorem}\label{thm:4.1}
For any integer $p\geq 0$ we have the following limits as $n\to\infty$:
\begin{equation}\label{4.7}
\frac{2^{2p+n+1}(2p)!}{\binom{n}{n/2}}\sum_{k=0}^\infty
\frac{\binom{2k}{k}G_p(k)}{4^k(2k+n+1)}\rightarrow\pi^{2p+1},
\end{equation}
and for $p\geq 1$,
\begin{equation}\label{4.8}
\frac{2^{2p+n-1}(2p-1)!}{\binom{n}{n/2}}\sum_{k=1}^\infty
\frac{4^kH_p(k)}{\binom{2k}{k}k(2k+n)}\rightarrow\pi^{2p},
\end{equation}
where $G_p(k)$ and $H_p(k)$ are as defined in \eqref{1.8} and \eqref{1.10}, and
for odd $n$ the binomial coefficient $\binom{n}{n/2}$ is interpreted as in
\eqref{4.6}.
\end{theorem}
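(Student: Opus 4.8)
The plan is to reduce both normalized sums to a single, transparent integral and then apply a concentration (Laplace-type) argument. Write $S_p^{(n)}:=\sum_{k=0}^\infty\frac{\binom{2k}{k}G_p(k)}{4^k(2k+n+1)}$ and $T_p^{(n)}:=\sum_{k=1}^\infty\frac{4^kH_p(k)}{\binom{2k}{k}k(2k+n)}$ for the series in \eqref{4.7} and \eqref{4.8}. First I would apply Lemma~\ref{lem:2.2}: setting $x=1$ and replacing $n$ by $n-1$ in \eqref{2.7} turns its left-hand side into $S_p^{(n)}$, and likewise \eqref{2.8} produces $\tfrac{1}{2}T_p^{(n)}$ on the left. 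On the right-hand sides the terms $I_{2p+1}^{(n-1)}(1)$ and $I_{2p}^{(n-1)}(1)$ appear; substituting $t=\sin\theta$ in \eqref{1.6} and integrating by parts (using $\tfrac{d}{d\theta}\sin^n\theta=n\sin^{n-1}\theta\cos\theta$) expresses each in terms of $\int_0^{\pi/2}\theta^{m}\sin^n\theta\,d\theta$. After the boundary terms cancel the $(\pi/2)^{\cdot}$ contributions, I expect to arrive at the clean identities
\[
S_p^{(n)}=\frac{1}{(2p)!}\int_0^{\pi/2}\theta^{2p}\sin^n\theta\,d\theta,
\qquad
T_p^{(n)}=\frac{2}{(2p-1)!}\int_0^{\pi/2}\theta^{2p-1}\sin^n\theta\,d\theta.
\]
(An equivalent route avoids Lemma~\ref{lem:2.2}: differentiate \eqref{1.7}, resp.\ \eqref{1.9}, multiply by $x^{n}$, integrate over $[0,1]$, and substitute $t=\sin\theta$; the interchange of sum and integral is justified by monotone convergence since every summand is nonnegative on $[0,1]$, despite the integrable singularity of $1/\sqrt{1-x^2}$ at $x=1$.)

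Second, I would substitute these representations into the normalized quantities. The key exact fact is the Wallis normalization
\[
\frac{2^{n+1}}{\binom{n}{n/2}}\int_0^{\pi/2}\sin^n\theta\,d\theta=\pi,
\]
valid for every integer $n\ge0$, where for odd $n$ the coefficient $\binom{n}{n/2}$ is read through \eqref{4.6}; this follows from the standard Wallis evaluation of $\int_0^{\pi/2}\sin^n\theta\,d\theta$ together with the $\Gamma$-identity recorded just before Theorem~\ref{thm:4.1}, and I would verify it separately for even and odd $n$. Writing $R_n^{(m)}:=\big(\int_0^{\pi/2}\theta^{m}\sin^n\theta\,d\theta\big)\big/\big(\int_0^{\pi/2}\sin^n\theta\,d\theta\big)$, this identity collapses the prefactors and gives the \emph{exact} relations
\[
\frac{2^{2p+n+1}(2p)!}{\binom{n}{n/2}}S_p^{(n)}=2^{2p}\pi\,R_n^{(2p)},
\qquad
\frac{2^{2p+n-1}(2p-1)!}{\binom{n}{n/2}}T_p^{(n)}=2^{2p-1}\pi\,R_n^{(2p-1)},
\]
so that all of the asymptotic content is isolated in the ratios $R_n^{(m)}$.

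The remaining and decisive step is to show $R_n^{(m)}\to(\pi/2)^m$ as $n\to\infty$; this is the step I expect to be the main obstacle, although it is a routine concentration argument. The density proportional to $\sin^n\theta$ concentrates at $\theta=\pi/2$, so I would fix $\varepsilon>0$ and split the interval at $\pi/2-\varepsilon$: on $[0,\pi/2-\varepsilon]$ one has $\sin\theta\le\cos\varepsilon<1$, so that portion of the numerator is $O\big((\cos\varepsilon)^n\big)$ and is negligible against the full integral, which decays only like $\sqrt{\pi/(2n)}$; on $[\pi/2-\varepsilon,\pi/2]$ the factor $\theta^m$ is squeezed between $(\pi/2-\varepsilon)^m$ and $(\pi/2)^m$ while the mass of $\sin^n\theta$ there tends to the full mass. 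Hence $(\pi/2-\varepsilon)^m\le\liminf R_n^{(m)}\le\limsup R_n^{(m)}\le(\pi/2)^m$, and letting $\varepsilon\to0$ gives $R_n^{(m)}\to(\pi/2)^m$. Substituting $m=2p$ and $m=2p-1$ into the exact relations above yields $2^{2p}\pi(\pi/2)^{2p}=\pi^{2p+1}$ and $2^{2p-1}\pi(\pi/2)^{2p-1}=\pi^{2p}$, which are precisely \eqref{4.7} and \eqref{4.8}.
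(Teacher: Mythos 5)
Your proposal is correct, and it takes a genuinely different route from the paper's proof. You reduce both series to the moments $\int_0^{\pi/2}\theta^{m}\sin^n\theta\,d\theta$ (both of your reductions check out: after the substitution $t=\sin\theta$ and integration by parts the boundary terms cancel exactly as you claim, and your monotone-convergence variant is the cleanest justification, since it sidesteps using Lemma~\ref{lem:2.2} at the boundary point $x=1$ of the disk of convergence), then absorb the prefactor $2^{n+1}/\binom{n}{n/2}$ exactly through the Wallis evaluation --- which indeed holds for both parities of $n$ under the convention \eqref{4.6} --- and finish with a standard concentration (Laplace-type) argument at $\theta=\pi/2$. The paper proceeds quite differently: it starts from its exact finite-$\ell$ evaluations \eqref{2.9}--\eqref{2.12} of Corollary~\ref{cor:2.3}, passes to the limit with Tannery's theorem (Lemma~\ref{lem:4.2}), which turns the binomially weighted inner sums into values of $\zeta(s)$ and $\beta(s)$, and then invokes Euler's formulas \eqref{4.17}--\eqref{4.18} together with the Bernoulli/Euler summation identities of Lemma~\ref{lem:4.3} to collapse the result to a pure power of $\pi$; the odd-$n$ case is then transferred from the even-$n$ case via the asymptotic ratio \eqref{4.23}. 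Your approach is shorter, more elementary, and uniform in the parity of $n$ (no zeta or beta values, no Bernoulli or Euler numbers, no Tannery), and it makes the answer transparent: the limits $\pi^{2p+1}$ and $\pi^{2p}$ arise as $2^{m}\pi\cdot(\pi/2)^{m}$ with $m=2p$, resp.\ $m=2p-1$, from concentration of $\sin^n\theta$ at $\pi/2$. What the paper's longer route buys is the exact closed forms at every finite $n$ (of independent interest, and the source of its other corollaries) and, as a by-product, the nontrivial Bernoulli-number identities of Lemma~\ref{lem:4.3}; your argument, by contrast, yields only the asymptotics.
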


We see that \eqref{4.7} with $p=0$ and $p=1$ immediately gives \eqref{4.1} and
\eqref{4.3}, respectively, while \eqref{4.8} with $p=1$ and $p=2$ recovers 
\eqref{4.2} and \eqref{4.4}.

For the proof of Theorem~\ref{4.1}, we require a few lemmas.

\begin{lemma}\label{lem:4.2}
Let $s\in{\mathbb R}$, $s>1$. Then we have the following limits as 
$\ell\to\infty$:
\begin{align}
\frac{1}{\binom{2\ell}{\ell}}\sum_{k=1}^\ell\binom{2\ell}{\ell-k}\frac{1}{k^s}
&\to\zeta(s),\label{4.9}\\
\frac{1}{\binom{2\ell}{\ell}}
\sum_{k=1}^\ell\binom{2\ell}{\ell-k}\frac{(-1)^{k-1}}{k^s}
&\to\left(1-2^{1-s}\right)\zeta(s),\label{4.10}\\
\frac{1}{\binom{2\ell+1}{\ell+1}}
\sum_{k=0}^\ell\binom{2\ell+1}{\ell-k}\frac{1}{(2k+1)^s}
&\to\left(1-2^{-s}\right)\zeta(s),\label{4.11}\\
\frac{1}{\binom{2\ell+1}{\ell+1}}
\sum_{k=0}^\ell\binom{2\ell+1}{\ell-k}\frac{(-1^k)}{(2k+1)^s}
&\to \beta(s),\label{4.12}
\end{align}
where $\zeta(s)$ is the Riemann zeta function and $\beta(s)$ is the Dirichlet 
beta function, respectively defined by
\[
\zeta(s)=\sum_{k=1}^\infty\frac{1}{k^s}\quad({\rm Re}(s)>1)\quad\hbox{and}\quad
\beta(s)=\sum_{k=0}^\infty\frac{(-1)^k}{(2k+1)^s}\quad({\rm Re}(s)>0).
\]
\end{lemma}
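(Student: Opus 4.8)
The plan is to reduce all four limits to a single mechanism: for each fixed $k$, the normalized binomial ratio converges to $1$ as $\ell\to\infty$ while remaining bounded by $1$, so the discrete form of dominated convergence (Tannery's theorem) permits interchanging the limit with the summation.

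First I would record the two relevant ratios in product form. For \eqref{4.9} and \eqref{4.10},
\[
\frac{\binom{2\ell}{\ell-k}}{\binom{2\ell}{\ell}}
=\frac{\ell!\,\ell!}{(\ell-k)!\,(\ell+k)!}
=\prod_{j=1}^{k}\frac{\ell-j+1}{\ell+j},
\]
and for \eqref{4.11} and \eqref{4.12},
\[
\frac{\binom{2\ell+1}{\ell-k}}{\binom{2\ell+1}{\ell+1}}
=\frac{(\ell+1)!\,\ell!}{(\ell-k)!\,(\ell+k+1)!}
=\prod_{j=1}^{k}\frac{\ell-j+1}{\ell+j+1}.
\]
In each case every factor lies in $(0,1]$ and tends to $1$ for fixed $k$; hence both ratios are bounded above by $1$ (reflecting the fact that $\binom{2\ell}{\ell}$ and $\binom{2\ell+1}{\ell+1}$ are maximal binomial coefficients) and converge termwise to $1$.

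Next I would extend each finite sum to an infinite series by declaring the ratio to be $0$ for $k>\ell$. Writing, for instance, the left-hand side of \eqref{4.9} as $\sum_{k\ge1}r_{\ell,k}\,k^{-s}$ with $0\le r_{\ell,k}\le 1$ and $r_{\ell,k}\to1$, the hypothesis $s>1$ makes $k^{-s}$ (respectively $(2k+1)^{-s}$) summable and a uniform dominating majorant for every summand; the alternating signs in \eqref{4.10} and \eqref{4.12} only reinforce this bound. Tannery's theorem then yields, in each case, the corresponding infinite series obtained by replacing the ratio with $1$.

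It remains to identify these limiting series. For \eqref{4.9} this is $\zeta(s)$ directly; for \eqref{4.10} it is the Dirichlet eta function $\eta(s)=\sum_{k\ge1}(-1)^{k-1}k^{-s}=(1-2^{1-s})\zeta(s)$; for \eqref{4.11} the restriction to odd denominators gives $\sum_{k\ge0}(2k+1)^{-s}=(1-2^{-s})\zeta(s)$; and for \eqref{4.12} it is $\beta(s)$ by definition. The only genuinely delicate point is the uniform domination required for the interchange, which is precisely why $s>1$ is assumed; the elementary bound $r_{\ell,k}\le1$ makes this immediate, so no further estimation is needed.
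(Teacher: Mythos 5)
Your proof is correct and follows essentially the same route as the paper's: both rewrite the ratios $\binom{2\ell}{\ell-k}/\binom{2\ell}{\ell}$ and $\binom{2\ell+1}{\ell-k}/\binom{2\ell+1}{\ell+1}$ as finite products tending to $1$ for fixed $k$, apply Tannery's theorem using $s>1$, and identify the limiting series via $\eta(s)=(1-2^{1-s})\zeta(s)$, $\sum_{k\geq 0}(2k+1)^{-s}=(1-2^{-s})\zeta(s)$, and the definition of $\beta(s)$. If anything, you are slightly more explicit than the paper about the dominating majorant (the bound $r_{\ell,k}\leq 1$ coming from the maximality of the central binomial coefficient), which the paper leaves implicit in its appeal to Tannery's theorem.
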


\begin{proof}
It is easy to see that for a fixed $k$ we have
\[
\frac{\binom{2\ell}{\ell-k}}{\binom{2\ell}{\ell}}
=\prod_{j=0}^{k-1}\frac{\ell-j}{\ell+1-j}\to 1,\quad\hbox{and}\quad
\frac{\binom{2\ell+1}{\ell-k}}{\binom{2\ell+1}{\ell+1}}
=\prod_{j=0}^{k-1}\frac{\ell-j}{\ell+2-j}\to 1,
\]
as $\ell\to\infty$. By Tannery's theorem (see, e.g., \cite[p.~136]{Br} or the
Appendix in \cite{Ho}), the limit of the sum in \eqref{4.9} is then $\zeta(s)$,
as desired. Similarly, the limits of the sums in \eqref{4.10} and \eqref{4.11} 
are respectively
\begin{align*}
\sum_{k=0}^\infty\frac{(-1)^{k-1}}{k^s}
&=\sum_{j=0}^\infty\frac{1}{j^s}-2\sum_{j=0}^\infty\frac{1}{(2j)^s}
=\left(1-2^{1-s}\right)\zeta(s),\\
\sum_{k=0}^\infty\frac{1}{(2k+1)^s}
&=\sum_{j=0}^\infty\frac{1}{j^s}-\sum_{j=0}^\infty\frac{1}{(2j)^s}
=\left(1-2^{-s}\right)\zeta(s),
\end{align*}
again as desired, and \eqref{4.12} follows similarly. However, while $\beta(s)$
is defined for all $s>0$, Tannery's theorem applies only for $s>1$ in this case.
\end{proof}

Important tools in dealing with zeta and related functions are the Bernoulli
and Euler numbers, which are usually defined by the exponential generating 
functions
\begin{equation}\label{4.13}
\frac{t}{e^t-1} = \sum_{n=0}^\infty B_n\frac{t^n}{n!}\qquad\hbox{and}\qquad
\frac{2}{e^t+e^{-t}} = \sum_{n=0}^\infty E_n\frac{t^n}{n!}.
\end{equation}
They satisfy, in particular,
\begin{equation}\label{4.14}
B_0=E_0=1,\quad B_1=-\frac{1}{2},\quad\hbox{and}\quad
B_{2j+1}=E_{2j-1}=0\quad\hbox{for all}\quad j\geq 1.
\end{equation}
We will also use the Bernoulli and Euler polynomials, which can be defined by
\begin{equation}\label{4.15}
B_n(x) = \sum_{j=0}^n\binom{n}{j}B_j x^{n-j},\qquad
E_{n-1}(x)=\frac{2}{n}\sum_{j=0}^n\binom{n}{j}\left(1-2^j\right)B_j x^{n-j},
\end{equation}
respectively. They have the special values
\begin{equation}\label{4.16}
B_n(\tfrac{1}{2}) = \left(2^{1-n}-1\right)B_n,\qquad
E_n(\tfrac{1}{2}) = 2^{-n}E_n.
\end{equation}
All these properties can be found, for instance, in \cite[Sec.~9.6]{GR} or
\cite[Ch.~24]{DLMF}. The main connection with the proof of Theorem~\ref{thm:4.1}
is given through Euler's well-known identity
\begin{equation}\label{4.17}
\zeta(2n)=(-1)^{n-1}\frac{(2\pi)^{2n}}{2(2n)!}B_{2n},\qquad n=1, 2, \ldots
\end{equation}
(see, e.g., \cite[eq.~9.616]{GR}) and its analogue
\begin{equation}\label{4.18}
\beta(2n+1)=(-1)^n\frac{\pi^{2n+1}}{2^{2n+2}(2n)!}E_{2n},\qquad n=0, 1, \ldots
\end{equation}
(see, e.g., \cite[eq.~0.233.6]{GR}). These last two identities are, in fact, 
special cases of a general identity for Dirichlet $L$-series at either even
or odd positive integers.

In the proof of Theorem~\ref{thm:4.1} we also require the following summation
formulas for Bernoulli numbers.

\begin{lemma}\label{lem:4.3}
For all integers $p\geq 1$ we have
\begin{align}
&\sum_{j=0}^{p-1}\binom{2p+1}{2j+1}2^{2p-2j}B_{2p-2j}=2p,\label{4.19}\\
&\sum_{j=0}^{p-1}\binom{2p+1}{2j+1}2^{2p-2j}\left(2^{2p-2j}-1\right)B_{2p-2j}
=(2p+1)(1-E_{2p}),\label{4.20}\\
&\sum_{j=0}^{p-1}\binom{2p}{2j}2^{2p-2j}B_{2p-2j}
=2p-1+\left(2-2^{2p}\right)B_{2p},\label{4.21}\\
&\sum_{j=0}^{p-1}\binom{2p}{2j}2^{2p-2j}\left(2^{2p-2j}-1\right)B_{2p-2j}
=2p.\label{4.22}
\end{align}
\end{lemma}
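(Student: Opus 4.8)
The plan is to reduce all four identities to two ``master'' generating-function relations for the Bernoulli and Euler polynomials, evaluated at the half-integer argument, and then to read off each formula by stripping the two lowest-order terms from a full binomial sum. The point is that the summands of \eqref{4.19}--\eqref{4.22} are exactly the even-index terms of $\sum_k\binom{n}{k}2^kB_k$ and $\sum_k\binom{n}{k}2^k(2^k-1)B_k$, with $n=2p+1$ or $n=2p$.

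First I would record an elementary scaling principle: if $(a_j)$ is any sequence and $P_n(x):=\sum_{j=0}^n\binom{n}{j}a_j\,x^{n-j}$, then $\sum_{j=0}^n\binom{n}{j}2^ja_j\,x^{n-j}=2^nP_n(x/2)$, since attaching the factor $2^j$ to $a_j$ is the same as the substitution $x\mapsto x/2$ followed by multiplication by $2^n$. Applying this with $a_j=B_j$, for which $P_n=B_n$ by the first formula in \eqref{4.15}, yields the first master identity $\sum_k\binom{n}{k}2^kB_k\,x^{n-k}=2^nB_n(x/2)$. Applying it with $a_j=(1-2^j)B_j$, for which $P_n=\tfrac{n}{2}E_{n-1}$ by the second formula in \eqref{4.15}, yields after a sign change the second master identity $\sum_k\binom{n}{k}2^k(2^k-1)B_k\,x^{n-k}=-2^{n-1}n\,E_{n-1}(x/2)$.

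Next I would set $x=1$ and invoke the special values \eqref{4.16}. The first identity then gives $\sum_{k=0}^n\binom{n}{k}2^kB_k=(2-2^n)B_n$, and the second gives $\sum_{k=0}^n\binom{n}{k}2^k(2^k-1)B_k=-n\,E_{n-1}$. Here it is essential that $B_n(\tfrac12)=0$ when $n=2p+1$ (because $B_m=0$ for odd $m\geq 3$ by \eqref{4.14}, and separately for $n=1$) and that $E_{2p-1}=0$ when $n=2p$, again by \eqref{4.14}; these two vanishings are precisely what make the right-hand sides of \eqref{4.19} and \eqref{4.22} collapse to $2p$. For the even case of the first sum and the odd case of the second, the surviving terms $(2-2^{2p})B_{2p}$ and $-(2p+1)E_{2p}$ are what appear in \eqref{4.21} and \eqref{4.20}.

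Finally I would match the ranges via the reindexing $k=2p-2j$, using the binomial symmetries $\binom{2p+1}{2j+1}=\binom{2p+1}{k}$ and $\binom{2p}{2j}=\binom{2p}{k}$, so that each target sum is the restriction of one full sum to even $k$ with $2\leq k\leq 2p$. Subtracting the leftover terms $k=0$ and $k=1$ (all odd $k\geq 3$ contribute nothing since $B_k=0$), and inserting $B_0=1$ and $B_1=-\tfrac12$, produces \eqref{4.19}--\eqref{4.22} after trivial algebra; note that the factor $2^k-1$ automatically annihilates the $k=0$ term in \eqref{4.20} and \eqref{4.22}. I expect no genuine obstacle: the only step needing care is the second master identity, where the extra factor $2^j$ sitting on top of the $(1-2^j)$ already present in \eqref{4.15} forces the scaling argument and one must track the resulting sign and the constant $2^{n-1}n$; everything else is bookkeeping of the boundary terms $k=0,1$.
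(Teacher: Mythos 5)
Your proof is correct and is essentially the paper's own argument: both complete the even-index sums to full binomial convolutions, recognize them via \eqref{4.15} as Bernoulli and Euler polynomials evaluated at $\tfrac{1}{2}$ (your master identities specialized at $x=1$), apply the special values \eqref{4.16} together with the vanishing facts \eqref{4.14}, and account for the $k=0$ and $k=1$ boundary terms. The only difference is organizational---you prove two general scaling identities once and then specialize, where the paper runs the same reindex-and-complete manipulation separately for each of the four sums.
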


\begin{proof}
All four identities can be derived in basically the same way: We first factor
out the term $2^{2p}$ and reverse the order of summation. Then, using the
properties in \eqref{4.14}, we write the sums either as Bernoulli or as Euler
polynomials, as given in \eqref{4.15}.
Denoting the sums in \eqref{4.19}--\eqref{4.22} by $S_1,\ldots, S_4$, 
respectively, we then get
\begin{align*}
S_1&=\sum_{j=1}^p\binom{2p+1}{2j}\left(\frac{1}{2}\right)^{2p-2j}B_{2j} \\
&=2^{2p}\left(\sum_{j=0}^{2p+1}\binom{2p+1}{j}\left(\frac{1}{2}\right)^{2p-j}
B_j -\left(\frac{1}{2}\right)^{2p}
-(2p+1)\left(\frac{1}{2}\right)^{2p-1}\left(\frac{-1}{2}\right)\right) \\
&=2^{2p}\left(2B_{2p+1}(\tfrac{1}{2})+\frac{2p}{2^{2p}}\right)=2p,
\end{align*}
where we have used the first identity in \eqref{4.16} and \eqref{4.14}. Next,
\begin{align*}
S_2&=\sum_{j=1}^p\binom{2p+1}{2j}\left(2^{2j}-1\right)
\left(\frac{1}{2}\right)^{2p-2j}B_{2j} \\
&=2^{2p}\left(\sum_{j=0}^{2p+1}\binom{2p+1}{j}\left(2^{2j}-1\right)
\left(\frac{1}{2}\right)^{2p-j}B_j 
-(2p+1)\left(\frac{1}{2}\right)^{2p-1}\left(\frac{-1}{2}\right)\right) \\
&=2^{2p}\left(-2\frac{2p+1}{2}B_{2p}(\tfrac{1}{2})+\frac{2p+1}{2^{2p}}\right)
=(2p+1)(1-E_{2p}),
\end{align*}
where we have used the second identity in \eqref{4.16}. Next, in analogy to
$S_1$ we have
\begin{align*}
S_3&=2^{2p}\left(\sum_{j=0}^{2p}\binom{2p}{j}\left(\frac{1}{2}\right)^{2p-j}B_j
-\left(\frac{1}{2}\right)^{2p}
-2p\left(\frac{1}{2}\right)^{2p-1}\left(\frac{-1}{2}\right)\right) \\
&=2^{2p}\left(2B_{2p}(\tfrac{1}{2})+\frac{2p-1}{2^{2p}}\right)
=\left(2-2^{2p}\right)B_{2p}+2p-1,
\end{align*}
where we have used the first identity in \eqref{4.16}. Finally, in analogy to
$S_2$ we have
\begin{align*}
S_4&=2^{2p}\left(\sum_{j=0}^{2p}\binom{2p}{j}\left(2^{2j}-1\right)
\left(\frac{1}{2}\right)^{2p-j}B_j
-2p\left(\frac{1}{2}\right)^{2p-1}\left(\frac{-1}{2}\right)\right) \\
&=2^{2p}\left(-2\frac{2p}{2}E_{2p-1}(\tfrac{1}{2})+\frac{2p}{2^{2p}}\right)=2p,
\end{align*}
where we have again used the second identity in \eqref{4.16}, followed by
\eqref{4.14}. This completes the proof.
\end{proof}

\begin{proof}[Proof of Theorem~\ref{thm:4.1}]
To prove \eqref{4.7}, we first multiply both sides of \eqref{2.10} by
$2^{2p+2\ell+1}(2p)!/\binom{2\ell}{\ell}$ and denote the resulting expression
by $X_\ell$. Then we have
\[
X_\ell=(-1)^{p-1}2(2p)!\sum_{j=0}^{p-1}
\left(\sum_{k=1}^\ell\frac{\binom{2\ell}{\ell-k}}{\binom{2\ell}{\ell}}
\frac{1}{k^{2p-2j}}\right)\frac{(-1)^j\pi^{2j+1}}{(2j+1)!}
+\frac{\pi^{2p+1}}{2p+1}.
\]
Denoting $X:=\lim_{\ell\to\infty}X_\ell$, we get with \eqref{4.9} and
\eqref{4.17},
\begin{align*}
X&=2(2p)!\sum_{j=0}^{p-1}\frac{(2\pi)^{2p-2j}}{2(2p-2j)!}B_{2p-2j}
\frac{\pi^{2j+1}}{(2j+1)!}+\frac{\pi^{2p+1}}{2p+1} \\
&=\frac{\pi^{2p+1}}{2p+1}\left(1+
\sum_{j=0}^{p-1}\binom{2p+1}{2j+1}2^{2p-2j}B_{2p-2j}\right)=\pi^{2p+1},
\end{align*}
where the last identity comes from \eqref{4.19}. This proves the limit 
\eqref{4.7} for even $n$.

Next, we multiply both sides of \eqref{2.9} by
$2^{2p+2\ell+2}(2p)!/\binom{2\ell+1}{\ell+1}$ and denote the resulting 
expression by $Y_\ell$. Then we have
\begin{align*}
Y_\ell&=(-1)^{p-1}2^{2p+2}(2p)!\sum_{j=0}^{p-1}
\left(\sum_{k=0}^\ell\frac{\binom{2\ell+1}{\ell-k}}{\binom{2\ell+1}{\ell+1}}
\frac{1}{(2k+1)^{2p-2j}}\right)
\frac{(-1)^j\left(\tfrac{\pi}{2}\right)^{2j+1}}{(2j+1)!}\\
&\quad+(-1)^p2^{2p+2}(2p)!\sum_{k=0}^\ell
\frac{\binom{2\ell+1}{\ell-k}}{\binom{2\ell+1}{\ell+1}}
\frac{(-1)^k}{(2k+1)^{2p+1}}.
\end{align*}
Denoting $Y:=\lim_{\ell\to\infty}Y_\ell$, we get with \eqref{4.11}, \eqref{4.12}
and with \eqref{4.17}, \eqref{4.18},
\begin{align*}
Y&=2^{2p+2}(2p)!\sum_{j=0}^{p-1}\left(1-2^{2j-2p}\right)
\frac{(2\pi)^{2p-2j}}{2(2p-2j)!}B_{2p-2j}\frac{\pi^{2j+1}}{2^{2j+1}(2j+1)!}\\
&\qquad+2^{2p+2}(2p)!\frac{\pi^{2p+1}}{2^{2p+2}(2p)!}E_{2p}\\
&=\pi^{2p+1}\left(E_{2p}+\frac{1}{2p+1}\sum_{j=0}^{p-1}\left(2^{2p-2j}-1\right)
\binom{2p+1}{2j+1}2^{2p-2j}B_{2p-2j}\right)=\pi^{2p+1},
\end{align*}
where the last identity comes from \eqref{4.19}. Furthermore, we have by
\eqref{4.6},
\begin{equation}\label{4.23}
\frac{\binom{2\ell+1}{\ell+1}}{\binom{2\ell+1}{\ell+1/2}}
=\frac{(2\ell+1)^2}{4(\ell+1)}\frac{\binom{2\ell}{\ell}^2}{4^{2\ell}}\pi,
\end{equation}
and by the well-known asymptotics $\binom{2\ell}{\ell}\sim 4^\ell/\sqrt{\pi k}$,
the expression \eqref{4.23} approaches 1 as $\ell\to\infty$. This, together
with the fact that $X_\ell\to\pi^{2p+1}$, proves the limit \eqref{4.7} for
odd $n$.

To prove \eqref{4.8}, we proceed analogously. We multiply both sides of 
\eqref{2.12} by $2^{2p+2\ell}(2p-1)!/\binom{2\ell}{\ell}$ and denote the 
resulting expression by $Z_\ell$. Then we have
\begin{align*}
Z_\ell&=(-1)^{p-1}2(2p-1)!\sum_{j=0}^{p-1}
\left(\sum_{k=1}^\ell\frac{\binom{2\ell}{\ell-k}}{\binom{2\ell}{\ell}}
\frac{1}{k^{2p-2j}}\right)\frac{(-1)^j\pi^{2j}}{(2j)!}\\
&\quad+\frac{\pi^{2p}}{2p}+(-1)^{p-1}2(2p-1)!
\sum_{k=1}^\ell\frac{\binom{2\ell}{\ell-k}}{\binom{2\ell}{\ell}}
\frac{(-1)^{k-1}}{k^{2p}}.
\end{align*}
Denoting $Z:=\lim_{\ell\to\infty}Z_\ell$, we get with \eqref{4.9}, \eqref{4.10}
and \eqref{4.17},
\begin{align*}
Z&=2(2p-1)!\sum_{j=0}^{p-1}\frac{(2\pi)^{2p-2j}}{2(2p-2j)!}B_{2p-2j}
\frac{\pi^{2j}}{(2j)!}+\frac{\pi^{2p}}{2p} \\
&\qquad+2(2p-1)!\left(1-2^{1-2p}\right)\frac{(2\pi)^{2p}}{2(2p)!}B_{2p}\\
&=\frac{\pi^{2p}}{2p}\left(1+\left(2^{2p}-2\right)B_{2p}
+\sum_{j=0}^{p-1}\binom{2p}{2j}2^{2p-2j}B_{2p-2j}\right)=\pi^{2p},
\end{align*}
where the last identity comes from \eqref{4.21}. This proves the limit
\eqref{4.8} for even $n$.

To deal with odd $n=2\ell+1$, we proceed again as before: Multiply both sides 
of \eqref{2.11} by $2^{2p+2\ell}(2p-1)!/\binom{2\ell+1}{\ell+1}$, take the
limit as $\ell\to\infty$ and denote it by $W$. Then we get with \eqref{4.11} 
and \eqref{4.17},
\begin{align*}
W&=2^{2p+1}(2p-1)!\sum_{j=0}^{p-1}\left(1-2^{2j-2p}\right)
\frac{(2\pi)^{2p-2j}}{2(2p-2j)!}B_{2p-2j}\frac{\pi^{2j}}{2^{2j}(2j)!}\\
&=\frac{\pi^{2p}}{2p}\sum_{j=0}^{p-1}\binom{2p}{2j}\left(2^{2p-2j}-1\right)
2^{2p-2j}B_{2p-2j}=\pi^{2p},
\end{align*}
where the last identity comes from \eqref{4.22}. Using \eqref{4.23}, as we did
in the proof of \eqref{4.7}, then shows that the limit \eqref{4.8} also holds
for odd $n$. The proof is now complete. 
\end{proof}

\section{Further remarks and consequences}\label{sec:5}

{\bf 1.} In an unpublished preprint, C.~Lupu \cite{Lu} used direct 
manipulations of the arcsine identities \eqref{1.7} and \eqref{1.9} to obtain
series evaluations that are quite similar to some of our corollaries in 
Section~\ref{sec:3}. For example, he proved (rewritten in our notation and
normalization) that for all integers $p\geq 0$, 
\[
\sum_{k=0}^{\infty}\frac{\binom{2k}{k}G_p(k)}{8^k}
=\frac{1}{(2p)!\sqrt{2}}\left(\frac{\pi}{4}\right)^{2p}
\]
and for $p\geq 1$,
\[
\sum_{k=1}^{\infty}\frac{2^kH_p(k)}{\binom{2k}{k}k}
=\frac{2\sqrt{2}}{(2p-1)!}\left(\frac{\pi}{4}\right)^{2p-1},
\]
which can be seen as supplementing our Corollary~\ref{cor:2.7}. These and the
six other such evaluations obtained in \cite{Lu} are of a particularly pleasing
form in that the right-hand sides are simple multiples of powers of $\pi$.

{\bf 2.} Using partial fraction expansions, we can obtain further series
evaluation from some of the identities in Sections~\ref{sec:2} and~\ref{sec:3}.
For instance, with
\begin{equation}\label{5.1}
\frac{1}{(2k+2+m)(2k+2+n)}=\frac{1}{n-m}
\left(\frac{1}{2k+2+m}-\frac{1}{2k+2+n}\right)
\end{equation}
($n\neq m$) combined with the identity \eqref{3.1}, we obtain the following.

\begin{corollary}\label{cor:5.1}
For all nonnegative integers $p$ and $n\neq m$ we have
\begin{equation}\label{5.2}
\sum_{k=0}^{\infty}\frac{\binom{2k}{k}G_p(k)x^{2k+2}}{4^k(2k+2+n)(2k+2+m)} 
=\frac{(n+1)x^{-n}I_{2p+1}^{(n)}(x)-(m+1)x^{-m}I_{2p+1}^{(m)}(x)}{(n-m)(2p+1)!}.
\end{equation}
\end{corollary}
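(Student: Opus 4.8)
The plan is to reduce the double-denominator sum to single-denominator sums that are already evaluated in \eqref{2.7}. First I would apply the partial fraction decomposition \eqref{5.1} to the summand and split, writing
\[
\sum_{k=0}^{\infty}\frac{\binom{2k}{k}G_p(k)x^{2k+2}}{4^k(2k+2+n)(2k+2+m)}
=\frac{1}{n-m}\left(S_m-S_n\right),
\]
where I set $S_r:=\sum_{k=0}^{\infty}\binom{2k}{k}G_p(k)x^{2k+2}/(4^k(2k+2+r))$ for $r\in\{m,n\}$. This splitting is legitimate on the interior of the common disk of convergence, where each single-denominator series converges.

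The key observation is that $S_r$ is, up to a power of $x$, exactly the left-hand side of \eqref{2.7}. Writing $x^{2k+2}/(2k+2+r)=x^{-r}\cdot x^{2k+r+2}/(2k+r+2)$ and pulling the factor $x^{-r}$ out of the sum (valid for $x\neq 0$), I would invoke \eqref{2.7}, which holds for every integer $r\geq0$ and hence for both $m$ and $n$, to obtain
\[
S_r=x^{-r}\left(\frac{(\arcsin x)^{2p+1}}{(2p+1)!}x^{r+1}
-\frac{r+1}{(2p+1)!}I_{2p+1}^{(r)}(x)\right)
=\frac{(\arcsin x)^{2p+1}}{(2p+1)!}x-\frac{(r+1)x^{-r}}{(2p+1)!}I_{2p+1}^{(r)}(x).
\]

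The pleasant feature is that the first term of $S_r$ equals $(\arcsin x)^{2p+1}x/(2p+1)!$, independent of $r$, so it cancels in the difference $S_m-S_n$. What survives is
\[
S_m-S_n=\frac{(n+1)x^{-n}I_{2p+1}^{(n)}(x)-(m+1)x^{-m}I_{2p+1}^{(m)}(x)}{(2p+1)!},
\]
and dividing by $n-m$ gives the asserted identity. There is essentially no obstacle here beyond bookkeeping: the only point requiring care is the elementary factoring $x^{2k+2}/(2k+2+r)=x^{-r}x^{2k+r+2}/(2k+r+2)$ that aligns $S_r$ with the exponent convention in \eqref{2.7}, together with the observation that the $r$-independent arcsine contributions cancel in the difference. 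The value $x=0$ is handled by continuity, both sides vanishing there.
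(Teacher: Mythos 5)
Your proposal is correct and follows essentially the same route as the paper: apply the partial fraction decomposition \eqref{5.1}, then evaluate each single-denominator series via Lemma~\ref{lem:2.2} (equation \eqref{2.7}, which is what the paper's citation of \eqref{3.1} evidently intends), with the $r$-independent arcsine terms cancelling in the difference. Nothing further is needed.
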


\begin{example}\label{ex:5.2}
{\rm With $x=1, n=1$ and $m=2$ we get from \eqref{5.2},
\begin{equation}\label{5.3}
\sum_{k=0}^{\infty}\frac{\binom{2k}{k}G_p(k)}{4^k(2k+3)(2k+4)}
=\frac{3I_{2p+1}^{(2)}(1)-2I_{2p+1}^{(1)}(1)}{(2p+1)!}.
\end{equation}
Using evaluations as in Corollary~\ref{cor:2.3}, the identity \eqref{5.3} 
yields with $p=0$ and $p=1$, respectively,}
\begin{equation}\label{5.4}
\sum_{k=0}^{\infty}\frac{\binom{2k}{k}}{4^k(2k+3)(2k+4)}
= \frac{\pi}{4}-\frac{2}{3},
\end{equation}
\begin{equation}\label{5.5}
\sum_{k=0}^{\infty}\frac{\binom{2k}{k}G_1(k)}{4^k(2k+3)(2k+4)}
= \frac{\pi^3}{96}-\frac{47\pi}{144}+\frac{20}{27}.
\end{equation}
\end{example}

\begin{example}\label{ex:5.3}
{\rm We choose $n=1$ and $m=2$ again and use \eqref{5.2} with \eqref{2.3} and
\eqref{2.4}. Then we get more generally for $p=0$,
\begin{align*}
\sum_{k=0}^{\infty}\frac{\binom{2k}{k}x^{2k+3}}{4^k(2k+2+n)(2k+2+m)}
&=-\frac{2}{3}+\left(\frac{x^2}{3}-\frac{x}{2}+\frac{2}{3}\right)\sqrt{1-x^2}\\
&\quad +\left(x^3-x^2+\frac{1}{2}\right)\arcsin{x},
\end{align*}
and for $p=1$,
\begin{align*}
&\sum_{k=0}^{\infty}\frac{\binom{2k}{k}G_1(k)x^{2k+3}}{4^k(2k+2+n)(2k+2+m)}
=\frac{20}{27}-\left(\frac{x^2}{27}-\frac{x}{8}+\frac{20}{27}\right)\sqrt{1-x^2}\\
&\quad-\left(\frac{x^3}{9}-\frac{x^2}{4}+\frac{2x}{3}+\frac{1}{8}\right)\arcsin{x}
+\left(\frac{x^2}{6}-\frac{x}{4}+\frac{1}{3}\right)\sqrt{1-x^2}\cdot\arcsin^2{x}\\
&\quad+\left(\frac{x^3}{6}-\frac{x^2}{6}+\frac{1}{12}\right)\arcsin^3{x}.
\end{align*}
Setting $x=1$, we recover the identities \eqref{5.4} and \eqref{5.5}.}
\end{example}

It is clear that we can obtain more identities by using \eqref{3.2} to get an
analogue of Corollary~\ref{cor:5.1}, or by using partial fraction expansions
of more than three factors in place of \eqref{5.1}.

\medskip
{\bf 3.} The multiple sums $H_p(k)$ and $G_p(k)$, defined in \eqref{1.10} and
\eqref{1.8}, are special cases of so-called multiple harmonic and multiple
$t$-harmonic sums which have recently been studied quite intensively.
In fact, we have
\[
H_{p+1}=4^{-p}H_{k-1}(\{2\}^p),
\]
where $\{2\}^p$ denotes the $p$-tuple $(2,2,\ldots,2)$; see, e.g., \cite{HHT}.
Second, we have 
\[
G_p(k) = t_k(\{2\}^p);
\]
see, e.g., \cite{LY}. The corresponding multiple sums with non-strict 
inequalities between their summation indices are called multiple harmonic star
sums and multiple $t$-harmonic star sums, respectively. They were the main
object of study in \cite{HHT} and \cite{LY}. We caution that our use use of 
the letter $p$ above should not be confused with the prime $p$ used in 
\cite{HHT} and in other related publications.

\medskip
{\bf 4.} As mentioned in the Introduction, we used a different method in 
\cite{DV} to evaluate $I_q^{(n)}(x)$ for $q=1,\ldots, 4$. The results are, 
in fact, of different forms, and by equating them we get some nontrivial 
combinatorial identities such as the following.

\begin{corollary}\label{cor:5.4}
For any integer $\ell\geq 1$ we have
\begin{align}
4\sum_{k=0}^{\lfloor\frac{\ell-1}{2}\rfloor}
\binom{2\ell}{\ell-2k-1}\frac{1}{(2k+1)^2}
&=\binom{2\ell}{\ell}\sum_{j=1}^\ell\frac{4^j}{\binom{2j}{j}j^2},\label{5.6} \\
\sum_{k=0}^\ell\binom{2\ell+1}{\ell-k}\frac{1}{(2k+1)^2}
&=\frac{4^{2\ell}}{\binom{2\ell}{\ell}(2\ell+1)}
\sum_{j=0}^\ell\frac{\binom{2j}{j}}{4^j(2j+1)}.\label{5.7}
\end{align}
\end{corollary}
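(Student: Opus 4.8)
The plan is to compute one and the same quantity in two different ways and to equate the results, exactly as announced in Remark~4 of this section. The relevant quantities are the two second-power arcsine sums
\[
\sum_{k=1}^{\infty}\frac{4^k}{\binom{2k}{k}2k(2k+2\ell+1)}\quad\text{and}\quad
\sum_{k=1}^{\infty}\frac{4^k}{\binom{2k}{k}2k(2k+2\ell)},
\]
which are the $x=1$ specializations of the series in Lemma~\ref{lem:2.2} with $p=1$ (equivalently the integrals $I_2^{(2\ell)}(1)$ and $I_2^{(2\ell-1)}(1)$). Each is evaluated once by the results of the present paper and once by the evaluation of $I_2^{(n)}(x)$ obtained by the different method of \cite{DV}; since the two closed forms describe the same number, equating them gives \eqref{5.7} and \eqref{5.6}, respectively.

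For the present-paper side I specialize Corollary~\ref{cor:2.3} to $p=1$ (with $H_1(k)=1$). In \eqref{2.11} only the $j=0$ term survives, so no power of $\pi$ occurs and one gets
\[
\sum_{k=1}^{\infty}\frac{4^k}{\binom{2k}{k}2k(2k+2\ell+1)}
=\frac{1}{4^\ell}\sum_{k=0}^\ell\frac{\binom{2\ell+1}{\ell-k}}{(2k+1)^2},
\]
whose right-hand side is $4^{-\ell}$ times the left-hand side of \eqref{5.7}. In \eqref{2.12} with $p=1$ the two rational sums combine into $\frac{1}{2^{2\ell+1}}\sum_{k=1}^\ell\binom{2\ell}{\ell-k}(1-(-1)^k)/k^2$, and the factor $1-(-1)^k$ annihilates the even $k$; writing $k=2j+1$ reduces this to $\tfrac{1}{4}$ of the left-hand side of \eqref{5.6}, leaving in addition the single term $\binom{2\ell}{\ell}\pi^2/(8\cdot4^\ell)$.

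On the other side I use the $(\arcsin x)^2$ analogues of \eqref{1.2} from \cite{DV}, taken at $x=1$. These present the same two sums as a rational multiple of a finite partial sum, namely $\frac{4^\ell}{\binom{2\ell}{\ell}(2\ell+1)}\sum_{j=0}^\ell\binom{2j}{j}/(4^j(2j+1))$ for the first and $\frac{\binom{2\ell}{\ell}}{4^{\ell+1}}\sum_{j=1}^\ell 4^j/(\binom{2j}{j}j^2)$ for the second, the latter accompanied by the same $\pi^2$ term $\binom{2\ell}{\ell}\pi^2/(8\cdot4^\ell)$. Equating the two closed forms of the first sum and clearing the factor $4^{-\ell}$ yields \eqref{5.7} at once (both sides being $\pi$-free); equating the two closed forms of the second sum, the $\pi^2$ terms cancel and multiplying through by $4^{\ell+1}$ yields \eqref{5.6}.

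The main obstacle is the careful matching of the \cite{DV} evaluations against those of Section~\ref{sec:2}: one must verify that the $\pi^2$ contributions on the two sides of the even-index sum agree exactly so that they cancel, and keep precise track of the powers of $4$ and of the central-binomial prefactors, which enter very differently in the two methods. The remaining steps---the collapse of the $p=1$ sums, the parity reduction via $1-(-1)^k$, and the substitution $k=2j+1$ that produces the upper limit $\lfloor(\ell-1)/2\rfloor$ and the binomial $\binom{2\ell}{\ell-2k-1}$ in \eqref{5.6}---are routine.
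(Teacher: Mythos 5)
Your proposal is correct and is essentially the paper's own proof: both arguments evaluate the same $p=1$, $x=1$ series once via \eqref{2.11} and \eqref{2.12} and once via Corollary~5 of \cite{DV}, and obtain \eqref{5.7} and \eqref{5.6} by equating the two closed forms (the $\pi^2$ terms cancelling, the rest being rational). One minor slip in wording: after the parity reduction, the rational part of \eqref{2.12} equals $4^{-(\ell+1)}$ times the left-hand side of \eqref{5.6}, not $\tfrac{1}{4}$ of it, but your concluding instruction to multiply through by $4^{\ell+1}$ shows you tracked the correct factor.
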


\begin{proof}
The rational part of \eqref{2.12} gives, after some simplification,
\[
\frac{1}{2^{2\ell}}\sum_{k=0}^{\lfloor\frac{\ell-1}{2}\rfloor}
\binom{2\ell}{\ell-2k-1}\frac{1}{(2k+1)^2}.
\]
On the other hand, the rational part of the same series in Corollary~5 in
\cite{DV} gives
\[
\frac{1}{2^{2\ell+2}}\binom{2\ell}{\ell}
\sum_{j=1}^\ell\frac{4^j}{\binom{2j}{j}j^2}.
\]
By equating the two, we immediately get \eqref{5.6}.

The identity \eqref{5.7} is obtained analogously by equating the rational part
of \eqref{2.11} in the case $p=1$ with that of the second identity in 
Corollary~5 of \cite{DV}. (A note of caution: The notation $I_q^{(n)}(x)$ in
\cite{DV} is slightly different from that in the present paper.)
\end{proof} 

We can obtain similar identities by taking $p=2$ in \eqref{2.11} and 
\eqref{2.12}. It turns out that, along with \eqref{5.6} and \eqref{5.7}, those
are the first cases of general identities for some new variants of multiple
harmonic sums and related multiple zeta functions. However, this goes beyond
the scope of the present paper and will be the subject of a separate paper
\cite{DV3}.

We finish with the observation that after dividing both sides of \eqref{5.6}
by $\binom{2\ell}{\ell}$, the limit as $\ell\to\infty$ exists, and by 
Tannery's theorem (see also Lemma~\ref{lem:4.2}) we get
\begin{equation}\label{5.8}
4\sum_{k=0}^{\infty}\frac{1}{(2k+1)^2}
=\sum_{j=1}^\infty\frac{4^j}{\binom{2j}{j}j^2}.
\end{equation}
The identity \eqref{5.6} can therefore be seen as a finite analogue of
the equality between the infinite series in \eqref{5.8}.

This gets us back to the beginning if we note that the series on the left of
\eqref{5.8} evaluates as $(1-2^{-2})\zeta(2)$ (see the proof of 
Lemma~\ref{lem:4.2}) and is therefore equal to $\pi^2/2$. This turns 
\eqref{5.8} into a well-known identity which is in fact the special case 
$x=1$ in the second identity of \eqref{1.1}.

\end{document}